\newtheorem{thm}{Theorem}[section]
\newtheorem{cor}[thm]{Corollary}
\newtheorem{lem}[thm]{Lemma}
\newtheorem{prop}[thm]{Proposition}
\theoremstyle{definition}
\newtheorem{defn}[thm]{Definition}
\theoremstyle{remark}
\newtheorem{rem}[thm]{Remark}
\newtheorem{exa}[thm]{Example}
\numberwithin{equation}{section}
\newcommand{\norm}[1]{\left\Vert#1\right\Vert}
\newcommand{\set}[1]{\left\{#1\right\}}
\newcommand{\RD}{D_{q,0^+}^{\alpha}}
\newcommand{\CP}{{}^cD_{q,a^+}}
\newcommand{\CM}{{}^cD_{q,{a}^-}}
\newcommand{\CZ}{{}^cD_{q,0^+}}
\newcommand{\To}{\longrightarrow}
\newcommand{\be}{\begin{equation}}
\newcommand{\ee}{\end{equation}}
\newcommand{\non}{\nonumber}
\newcommand{\bea}{\begin{eqnarray}}
\newcommand{\eea}{\end{eqnarray}}
\newcommand{\E}{{}_0E_{a}^{\alpha}}
\newcommand{\V}{{}^c\text{Var}(0,a)}
\newcommand{\ds}{\displaystyle}
\newcommand*{\Scale}[2][4]{\scalebox{#1}{$#2$}}%
\journal{}
\begin{document}

\begin{frontmatter}



\title{Variational methods for  fractional $q$-Sturm--Liouville Problems}


\author{Zeinab S.I. Mansour}

\address{Department of Mathematics, Faculty of Science, King Saud University, Riyadh}
 \ead{zsmansour@ksu.edu.sa, zeinabs98@hotmail.com}

\begin{abstract}
In this paper, we formulate a regular $q$-fractional Sturm--Liouville problem (qFSLP) which  includes  the left-sided Riemann--Liouville and  the right-sided Caputo $q$-fractional  derivatives  of the same  order $\alpha$, $\alpha\in (0,1)$.     We  introduce the essential  $q$-fractional  variational analysis  needed in proving  the existence of a countable set of real eigenvalues and associated  orthogonal eigenfunctions for  the regular qFSLP when  $\alpha>1/2$ asscociated with the boundary condition  $y(0)=y(a)=0$. A criteria for the first eigenvalue is proved. Examples are included.  These results  are a generalization of  the  integer regular $q$-Sturm--Liouville problem introduced by Annaby and Mansour in\cite{AM1}.
\end{abstract}

\begin{keyword}
Left and right sided Riemann--Liouville and Caputo $q$-derivatives, eigenvalues and eigenfunctions,  $q$-fractional variational calculus.


 \MSC      39A13\sep 26A33\sep 49R05.
\end{keyword}

\end{frontmatter}

\bigskip
\section{\bf{Introduction}}

In the joint paper of Sturm and Liouville~\cite{Sturm-Liouville},  they studied the problem
\be \label{SLE} -\frac{d}{dx}\left(p\frac{dy}{dx}\right)+r(x)y(x)=\lambda w y(x),\quad x\in [a,b], \ee
with certain boundary conditions at $a$ and $b$. Here, the functions $p$, $w$ are positive on $[a,b]$ and $r$ is a real valued function on $[a,b]$. They proved the existence
of non-zero solutions (eigenfunctions) only for special values of the parameter $\lambda$ which is called eigenvalues.
For a comprehensive study for the contribution of Sturm and Liouville to the theory, see~\cite{STL}. Recently, many mathematicians were interested in a fractional version of
\eqref{SLE}, i.e. when the derivative is replaced by a fractional derivative like  Riemann--Liouville derivative or Caputo derivative, see~\cite{KOM,Klimek-Agrawal,FSL1,FSL2,FSL3,FSL4}. Iterative methods, variational method, and the fixed point  theory are three  different approaches used in  proving the existence and uniqueness of solutions
of Sturm--Liouville problems, c.f.~\cite{GFbook,Zettl,STL}.  The calculus of variations
has recently developed to calculate extremum of functional contains fractional derivatives, which is called fractional calculus of variations, see for example~\cite{Almedia-Malinowska-Torres, Almedia-Torres1, Almedia-Torres2,Almedia-Torres3,Agarwal2002,Agrawal2007,Agrawal2007-2,Agrawal2006}. In~\cite{KOM},
Klimek et al. applied the methods of fractional variational calculus to prove the existence of a countable set of orthogonal solutions and corresponding eigenvalues.
In~\cite{AM1} Annaby and Mansour introduced a $q$-version of \eqref{SLE}, i.e., when the derivative is replaced by Jackson $q$-derivative. Their results are applied and developed in different aspects, for example,
see~\cite{Lavagno,Abreu005,AnBuIs,AMI,Nemri-Fitouhi,Abreu007}.
 Throughout this paper $q$ is a positive number less than 1. The set of non negative integers is denoted by $\mathbb{N}_0$, and the set of positive integers is denoted by $\mathbb{N}$. For $t>0$,
\[A_{q,t}:=\set{tq^n\;:\;n\in\mathbb{N}_0},\;\;\;A_{q,t}^*:=A_{q,t}\cup\set{0},\]
and \[\mathcal{A}_{q,t}:=\set{\pm tq^n\;:\;n\in\mathbb{N}_0}.\]
When $t=1$, we simply use $A_{q}$, $A_q^*$, and $\mathcal{A}_q$ to denote $A_{q,1}$, $A_{q,1}^*$, and $\mathcal{A}_{q,1}$, respectively.    We follow~\cite{GR} for the definitions and notations of the $q$-shifted factorial, the $q$-gamma and $q$-beta functions, the basic hypergeometric series, and
Jackson $q$-difference operator and integrals. A set $A$ is called  a $q$-geometric set if $qx\in A$ whenever $x\in A$.
Let $X$ be a $q$-geometric set containing zero. A function $f$ defined on $X$ is called $q$-regular at zero if
\[\lim_{n\to\infty} f(xq^n)=f(0)\quad\mbox{for all}\;x\in X.\]
  Let $C(X)$  denote the space of all $q$-regular at zero functions defined  on
$X$ with values in $\mathbb{R}$. $C(X)$ associated with the norm  function
\[\norm{f} = \sup\set{|f(xq^n)|\; :\; x\in X,\; n\in\mathbb{N}_0},\]
is a normed space.
The $q$-integration by parts rule~\cite{AMbook} is
\be\label{qIR}\int_{a}^{b}f(x)D_qg(x)=f(x)g(x)|_{a}^{b}+\int_{a}^{b}D_qf(x)g(qx)\,d_qx,\; a,b\in X,\ee
and $f,\,g$ are $q$-regular at zero functions.

For $p>0$, and   $Y$ is $A_{q,t}$ or $A_{q,t}^*$, the space $L_q^p(Y)$ is the normed space
of all functions defined on $Y$ such that
\[\norm{f}_p:=\left(\int_{0}^{t}|f(u)|^p\,d_qu\right)^{1/p}<\infty. \]
If $p=2$, then $L_q^2(Y)$  associated with the inner product
\be\label{IP} \left<f,g\right>:=\int_{0}^{t}f(u)\overline{g(u)}\,d_qu\ee
is a Hilbert space. By a weighted $L_q^2(Y, w)$ space is the space of all functions  $f$ defined on $Y$
such that
\[\int_{0}^{t}|f(u)|^2 w(u)\,d_qu<\infty,\]
where $w$ is a positive function defined on $Y$. $L_q^2(Y, w)$ associated with the inner product
\[ \left<f,g\right>:=\int_{0}^{t}f(u)\overline{g(u)}w(u)\,d_qu\]
is a Hilbert space.
The space of all $q$-absolutely functions on $A_{q,t}^*$ is denoted by $\mathcal{A}C_q(A_{q,t}^*)$ and defined as the space of all $q$-regular
at zero functions $f$
satisfying \[\sum_{j=0}^{\infty}|f(uq^j)-f(uq^{j+1})|\leq K\;\mbox{for all}\; u\in A_{q,t}^{*}, \]
and $K$ is a constant depending on the function $f$, c.f.~\cite[Definition 4.3.1]{AMbook}. I.e.
\[\mathcal{A}C_q(A_{q,t}^*)\subseteq C_q(A_{q,t}^*).\]
The  space $\mathcal{A}C_q^{(n)}(A_{q,t}^*)$  ($n\in\mathbb{N}$) is the space
of all functions defined on $X$ such that  $f,\, D_qf,\,\ldots,\, D_q^{n-1}f$ are $q$-regular at zero and $D_q^{n-1}f\in \mathcal{A}C_q(A_{q,t}^*)$, c.f.~\cite[Definition 4.3.2]{AMbook}.
Also it is proved in~\cite[Theorem 4.6]{AMbook} that a function $f\in \mathcal{A}C_q^{(n)}(A_{q,t}^*)$ if and only if there exists a function $\phi\in L_q^1(A_{q,t}^*)$
such that
\[f(x)=\sum_{k=0}^{n-1}\frac{D_q^kf(0)}{\Gamma_q(k+1)}x^k+\frac{x^{n-1}}{\Gamma_q(n)}\int_{0}^{x} (qu/x;q)_{n-1}\phi(u)\,d_qu,\;x\in A_{q,t}^*.\]
In particular, $f\in \mathcal{A}C(A_{q,t}^*)$ if and only if $f$ is $q$-regular at zero such that $D_qf\in L_q^1(A_{q,t}^*)$.
It is worth noting that in~\cite{AMbook}, all the definitions and results we have just mentioned are defined and proved for functions defined on the interval $[0,a]$
instead of $A_{q,t}^*$.
In~\cite{Zeinab_FSL1}, Mansour studied the problem
\be\label{EVP1}
D_{q,a^-}^{\alpha}p(x)\CZ^{\alpha}y(x) +\left(r(x)-\lambda w_{\alpha}(x)\right)y(x)=0, \quad x\in A_{q,a}^*,
\ee
where $p(x)\neq 0$ and $w_{\alpha}>0$ for all $x\in A_{q,a}^*$, $p,r,\,w_{\alpha}$ are real valued functions defined in $A_{q,a}^*$
and the associated  boundary conditions are
\be\label{BC1}
c_1 y(0)+c_2 \left[I_{q,a^-}^{1-\alpha}\,p\CZ^{\alpha}y\right](0)=0,
\ee
\be\label{BC2}
d_1 y(a)+d_2\left[ I_{q,a^-}^{1-\alpha}\,p\CZ^{\alpha}y\right](\frac{a}{q})=0,\ee
with $c_1^2+c_2^2\neq 0$ and $d_1^2+d_2^2\neq 0$.
  it is proved that the eigenvalues are real and the eigenfunctions associated to different eigenvalues are orthogonal in the   Hilbert space $L_q^2(A_{q,a}^*, w_{\alpha})$. A sufficient  condition on
  the parameter $\lambda$ to guarantee the existence  and uniqueness of the solution is introduced by using the fixed point theorem,  also a  condition is imposed  on the domain of the problem in order to prove the existence and uniqueness of  solution
for any $\lambda$.
This paper is organized as follows. Section 2 is on the $q$-fractional operators and their properties which we need in the sequel.
Cardoso~\cite{Cardoso011} introduced basic Fourier series  for functions defined on a $q$-linear grid of the form $\set{\pm q^n:\; n\in\mathbb{N}_0}\cup\set{0}$.
In Section 3,  we reformulate  Cardoso's results for functions defined on a $q$-linear grid of the form $\set{\pm aq^n:\;n\in\mathbb{N}_0}\cup\set{0}$. In Section 4, we introduce a fractional $q$-analogue for Euler--Lagrange
equations for functionals defined in terms of Jackson $q$-integration and the integrand contains  the left sided Caputo fractional $q$-derivative. We also introduce a fractional $q$-isoperimetric problem. In Section 5,  we use the variational  $q$-calculus developed in Section 4 to prove the existence of a countable number of eigenvalues and orthogonal eigenfunctions
for the fractional $q$-Sturm--Liouville problem with the boundary condition $y(0)=y(a)=0$. We also define the Rayleigh quotient and prove a   criteria for the smallest eigenvalue.

\section{Fractional $q$-Calculus}

This section includes the definitions and properties of  the left sided and right sided Riemann--Liouville $q$-fractional operators which we need in our investigations.

The  left   sided  Riemann--Liouville $q$-fractional operator is defined by
\be\label{LS:OP}
I_{q,a^+}^{\alpha}f(x)=\dfrac{x^{\alpha-1}}{\Gamma_q(\alpha)}\int_{a}^{x}(qt/x;q)_{\alpha-1}f(t)\,d_qt.
\ee
 This  definition   is introduced by Agarwal in~\cite{Agr} when  $a=0$ and by Rajkovi\'{c} et.al~\cite{Rajkovic09} for $a\neq 0$.
 The  right sided Riemann--Liouville $q$-fractional operator  by
\be\label{RS:OP}
I_{q,b^-}^{\alpha}f(x)=\frac{1}{\Gamma_q(\alpha)}\int_{qx}^{b}t^{\alpha-1}(qx/t;q)_{\alpha-1}f(t)\,d_qt,
\ee
see~\cite{Zeinab_FSL1}.
The left sided Riemann--Liouville $q$-fractional operator satisfies the semigroup property
\[
I_{q,a^+}^\alpha I_{q,a^+}^{\beta}f(x)=I_{q,a^+}^{\alpha+\beta}f(x).
\]
The case $a=0$ is proved in~\cite{Agr} and   the case $a>0$ is proved  in~\cite{Rajkovic09}.

The right sided Riemann--Liouville $q$-fractional operator satisfies the semigroup property~\cite{Zeinab_FSL1}
\be
 I_{q,b^-}^\alpha I_{q,{{b}}^-}^{\beta}f(x)=I_{q,b^-}^{\alpha+\beta}f(x),\;x\in A_{q,b}^*,
\ee
for any function defined on $A_{q,b}$ and for any values of  $\alpha$ and $\beta$.

For  $\alpha>0$ and  $\ulcorner\alpha\urcorner=m$,   the left and right side  Riemann--Liouville fractional $q$-derivatives of order $\alpha$ are defined by
\[
D_{q,a^+}^\alpha f(x):=D_q^mI_{q,a^+}^{m-\alpha}f(x),\; D_{q,b^-}^\alpha f(x):=\left(\frac{-1}{q}\right)^mD_{q^{-1}}^mI_{q,b^-}^{m-\alpha}f(x),
\]
the left and right sided Caputo fractional $q$-derivatives of order $\alpha$ are defined by
\[
\CP^{\alpha}f(x):=I_{q,a^+}^{m-\alpha}D_q^mf(x),\quad{}^cD_{q,b^-}^\alpha:=\left(\frac{-1}{q}\right)^mI_{q,b^-}^{m-\alpha}D_{q^{-1}}^mf(x).
\]
see~\cite{Zeinab_FSL1}.
From now on, we shall consider left sided Riemann--Liouville and Caputo fractional $q$-derivatives when the lower point $a=0$ and right sided Riemann--Liouville and Caputo fractional $q$-derivatives when $b=a$.
According to~\cite[pp. 124,\,148]{AMbook}, $D_{q,0^+}^{\alpha}f(x)$ exists if
\[f\in L_q^1(A_{q,a}^*)\;\;\mbox{such that}\;\;I_{q,0^+}^{m-\alpha}f\in \mathcal{A}C_q^{(m)}(A_{q,a}^*),\]
and $\CP^{\alpha}f$ exists if
\[f \in \mathcal{A}C_q^{(m)}(A_{q,a}^*).\]

The following proposition  is proved in~\cite{Zeinab_FSL1}
\begin{prop}

\begin{enumerate}Let $\alpha\in(0,1)$.

\item[(i)]If $f\in L_q^1(A_{q,a}^*)$  such that $I_{q,0^+}^{\alpha}f\in \mathcal{A}C_q(A_{q,a}^*)$ then

\be\label{CR3}\CZ^{\alpha}I_{q,0^+}^{\alpha}f(x)=f(x)-\dfrac{I_{q,0^+}^{\alpha}f(0)}{\Gamma_q(1-\alpha)}x^{-\alpha}.
\ee
Moreover, if $f$ is bounded on $A_{q,a}^*$ then
\be\label{CR3-}\CZ^{\alpha}I_{q,0^+}^{\alpha}f(x)=f(x).\ee
\item[(ii)] For any function $f$ defined on $A_{q,a}^*$
\be\label{CR3-1}
 \CM^{\alpha}I_{q,a^-}^{\alpha}f(x)=f(x)-\frac{a^{-\alpha}}{\Gamma_q(1-\alpha)}(qx/a;q)_{-\alpha}\left(I_{q,a^-}^{\alpha}f\right)(\frac{a}{q}).\ee

\item[(iii)] If $f\in L_q^1(A_{q,a})$ then
\be\label{CR1}
D_{q,0^+}^{\alpha}I_{q,0^+}^{\alpha}f(x)=f(x).
\ee

\item[(iv)] For any function $f$ defined on $A_{q,a}^*$
\be\label{CR2}
D_{q,a^-}^{\alpha}I_{q,a^-}^{\alpha}f(x)=f(x).
\ee
\item[(v)] If $f\in\mathcal{A}C_q(A_{q,a}^*))$  then
\be\label{CR4}
I_{q,0^+}^{\alpha}\CZ^{\alpha}f(x)=f(x)-f(0).
\ee
\item[(vi)] If $f$ is a function defined on $A_{q,a}^*$ then
\be\label{CR5}
I_{q,a^-}^{\alpha}D_{q,a^-}^{\alpha}f(x)=f(x)-\frac{a^{\alpha-1}}{\Gamma_q(\alpha)}(qx/a;q)_{\alpha-1}\left(I_{q,a^-}^{1-\alpha}f\right)(\frac{a}{q}).
\ee
\item[(v)] If $f$ is defined on $[0,a]$ such that $D_qf$ is continuous on $[0,a]$ then
\be\label{CR6}
\CZ^{\alpha}f(x)=D_{q,0^+}^{\alpha}\left[f(x)-f(0)\right].
\ee
\end{enumerate}
\end{prop}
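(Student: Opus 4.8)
The plan is to exploit that $\alpha\in(0,1)$ forces $\ulcorner\alpha\urcorner=1$, so every operator appearing is a single Jackson derivative ($D_q$ or $D_{q^{-1}}$) composed with a Riemann--Liouville $q$-integral; the whole proposition then rests on two ingredients already in hand: the semigroup law $I_{q,0^+}^{\alpha}I_{q,0^+}^{\beta}=I_{q,0^+}^{\alpha+\beta}$ (and its right-sided twin), together with the fundamental theorems of Jackson calculus. First I would record the order-one evaluations $I_{q,0^+}^{1}f(x)=\int_0^x f(t)\,d_qt$ and $I_{q,a^-}^{1}f(x)=\int_{qx}^{a}f(t)\,d_qt$, immediate from $\Gamma_q(1)=1$ and $(\,\cdot\,;q)_0=1$, together with the matching inversion rules $D_q\int_0^x f\,d_qt=f(x)$ and $(-1/q)D_{q^{-1}}\int_{qx}^{a}f\,d_qt=f(x)$, the latter by a short computation on the two-point $q^{-1}$-difference quotient.

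With these recorded, \eqref{CR1}, \eqref{CR2} and \eqref{CR4} are nearly immediate. For \eqref{CR1} I would write $D_{q,0^+}^{\alpha}I_{q,0^+}^{\alpha}f=D_qI_{q,0^+}^{1-\alpha}I_{q,0^+}^{\alpha}f=D_qI_{q,0^+}^{1}f=f$ by the semigroup law and the inversion rule, and \eqref{CR2} is the same argument on the right with $(-1/q)D_{q^{-1}}$. For \eqref{CR4}, $I_{q,0^+}^{\alpha}\,\CZ^{\alpha}f=I_{q,0^+}^{\alpha}I_{q,0^+}^{1-\alpha}D_qf=I_{q,0^+}^{1}D_qf=f(x)-f(0)$, again by the semigroup law and the fundamental theorem.

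The Caputo identities \eqref{CR3} and \eqref{CR3-1} I would reduce to the Riemann--Liouville ones by first proving the bridge relation \eqref{CR6}. The key step is the commutation formula $I_{q,0^+}^{1-\alpha}D_qf=D_qI_{q,0^+}^{1-\alpha}f-\frac{x^{-\alpha}}{\Gamma_q(1-\alpha)}f(0)$, which I would derive from the $q$-integration by parts rule \eqref{qIR}, together with the power rule $D_{q,0^+}^{\alpha}1=x^{-\alpha}/\Gamma_q(1-\alpha)$; this gives $\CZ^{\alpha}f=D_{q,0^+}^{\alpha}[f-f(0)]$. Replacing $f$ by $I_{q,0^+}^{\alpha}f$ and invoking \eqref{CR1} yields \eqref{CR3}, and the refinement \eqref{CR3-} follows once one checks $\bigl(I_{q,0^+}^{\alpha}f\bigr)(0)=\lim_{x\to0}I_{q,0^+}^{\alpha}f(x)=0$ for bounded $f$, by bounding the Jackson integral in \eqref{LS:OP} against $I_{q,0^+}^{\alpha}1=x^{\alpha}/\Gamma_q(\alpha+1)$. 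The right-sided \eqref{CR3-1} comes from the analogous right bridge ${}^cD_{q,a^-}^{\alpha}g=D_{q,a^-}^{\alpha}g-\frac{a^{-\alpha}}{\Gamma_q(1-\alpha)}(qx/a;q)_{-\alpha}g(a/q)$ combined with \eqref{CR2}.

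I expect the genuine difficulty to sit in the right-sided statements \eqref{CR3-1} and \eqref{CR5}. There the boundary of the Jackson integral sits at the $q$-shifted point $a/q$ rather than at $a$, and commuting $(-1/q)D_{q^{-1}}$ past $I_{q,a^-}^{1-\alpha}$ (or past $I_{q,a^-}^{\alpha}$ in \eqref{CR5}) produces exactly the boundary contribution $\frac{a^{\alpha-1}}{\Gamma_q(\alpha)}(qx/a;q)_{\alpha-1}\bigl(I_{q,a^-}^{1-\alpha}f\bigr)(a/q)$; obtaining this term with the correct kernel and the correct evaluation point is the delicate bookkeeping, and it must be carried out with the $q^{-1}$-analogue of \eqref{qIR}. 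A secondary point to watch is that every manipulation be justified under the stated weak hypotheses (for \eqref{CR3}, only $f\in L_q^1$ with $I_{q,0^+}^{\alpha}f\in\mathcal{A}C_q$), so that each intermediate $q$-integral converges and each Jackson derivative exists.
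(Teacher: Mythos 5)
You should know at the outset that the paper contains no proof of this proposition to compare against: it is stated verbatim as a result imported from the reference \cite{Zeinab_FSL1} (``The following proposition is proved in\ldots''), so all I can do is assess your proposal on its own terms. On those terms it is sound, and the reductions you propose are the right ones: with $\ulcorner\alpha\urcorner=1$, items (iii), (iv), (v) are exactly semigroup law plus the two fundamental theorems of Jackson calculus, and the remaining items are the same machinery plus the power rules. I checked the two computational facts your right-sided arguments hinge on, and they do hold: $I_{q,a^-}^{\beta}1=\frac{a^{\beta}}{\Gamma_q(\beta+1)}(qx/a;q)_{\beta}$, whence $D_{q,a^-}^{\beta}1=\frac{a^{-\beta}}{\Gamma_q(1-\beta)}(qx/a;q)_{-\beta}$, which is precisely the boundary kernel appearing in \eqref{CR3-1} and (with $\beta=1-\alpha$) in \eqref{CR5}. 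Also note that for $x=aq^n$ the right-sided Jackson integral is a \emph{finite} sum, $\int_{qx}^{a}g(t)\,d_qt=a(1-q)\sum_{k=0}^{n}q^{k}g(aq^{k})$, so the right-sided fundamental theorem $I_{q,a^-}^{1}\bigl[(-1/q)D_{q^{-1}}f\bigr](x)=f(x)-f(a/q)$ is a finite telescoping identity requiring no regularity whatsoever; this is what legitimizes the phrase ``for any function defined on $A_{q,a}^*$'' in (ii), (iv), (vi), a point your write-up leaves implicit.

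One simplification would tighten the only place where your plan is slightly roundabout. You derive \eqref{CR3} from the bridge \eqref{CR6}, proved by $q$-integration by parts, and then worry (correctly) about whether \eqref{CR6} applies under the weaker hypothesis of (i). You can bypass this entirely: the hypothesis $I_{q,0^+}^{\alpha}f\in\mathcal{A}C_q(A_{q,a}^*)$ gives exactly the decomposition $I_{q,0^+}^{\alpha}f=(I_{q,0^+}^{\alpha}f)(0)+I_{q,0^+}^{1}g$ with $g:=D_qI_{q,0^+}^{\alpha}f\in L_q^1$; applying $D_qI_{q,0^+}^{1-\alpha}$ to both sides and using the semigroup law together with \eqref{CR1} yields
\begin{equation*}
f(x)=\frac{(I_{q,0^+}^{\alpha}f)(0)}{\Gamma_q(1-\alpha)}\,x^{-\alpha}+I_{q,0^+}^{1-\alpha}D_qI_{q,0^+}^{\alpha}f(x)
=\frac{(I_{q,0^+}^{\alpha}f)(0)}{\Gamma_q(1-\alpha)}\,x^{-\alpha}+\CZ^{\alpha}I_{q,0^+}^{\alpha}f(x),
\end{equation*}
which is \eqref{CR3} directly; the same decomposition applied to an arbitrary $f$ that is $q$-regular at zero gives \eqref{CR6} as a corollary rather than a prerequisite, with no integration by parts needed. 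The analogous right-sided decomposition $f=f(a/q)+I_{q,a^-}^{1}\bigl[(-1/q)D_{q^{-1}}f\bigr]$ does the same job for (ii) and (vi). With these adjustments your outline becomes a complete proof.
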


Set $X=A_{q,a}$ or $A_{q,a}^*$. Then
\[C(X)\subseteq L_q^2(X)\subseteq L_q^1(X).\]
Moreover, if $f\in C(X)$ then
\[\norm{f}_1\leq \sqrt{a}\norm{f}_2\leq a \norm{f}.\]
We have also the following inequalities:
\begin{enumerate}
\item  If $f\in C(A_{q,a}^*)$ then $I_{q,0^+}^{\alpha}f\in C(A_{q,a}^*)$ and
 \be\label{ineq1}   \norm{I_{q,0^+}^{\alpha}f}\leq \frac{a^{\alpha}}{\Gamma_q(\alpha+1)}\norm{f}.\ee
\item If $f\in L_q^1(X)$ then $I_{q,0^+}^{\alpha}f\in L_q^1(X)$ and
\be\label{ineq3}\norm{I_{q,0^+}^{\alpha}f}_1\leq M_{\alpha,1}\norm{f}_1,\qquad M_{\alpha,1}:=\dfrac{a^{\alpha}(1-q)^{\alpha}}{(1-q^{\alpha})(q;q)_{\infty}}.
\ee
\item If $f\in L_q^2(X)$ then $I_{q,0^+}^{\alpha}f\in L_q^2(X)$ and
\be\label{ineq2}
\norm{I_{q,0^+}^{\alpha}f}_2\leq\,M_{\alpha,2}\norm{f}_2,\ee
where
\[
M_{\alpha,2}:=\frac{a^{\alpha}}{\Gamma_q(\alpha)}\sqrt{\frac{(1-q)}{(1-q^{2\alpha})}}\left(\int_{0}^{1}(q\xi;q)^2_{\alpha-1}\,d_q\xi\right)^{1/2}.
\]
\item If $\alpha>\frac{1}{2}$ and  $f\in L_q^2(X)$ then $I_{q,0^+}^{\alpha}f\in C(X)$ and
\be\label{ineq0}
\norm{I_{q,0^+}^{\alpha}f}\leq \widetilde{M}_{\alpha}\norm{f},\;\widetilde{M}_{\alpha}:= \frac{a^{\alpha-\frac{1}{2}}}{\Gamma_q(\alpha)}\left(\int_{0}^{1}(q\xi;q)^2_{\alpha-1}\,d_q\xi\right)^{1/2}.
\ee
\item Since $\norm{f}_2\leq \sqrt{a}\norm{f}$,  we conclude that if $f\in C(X)$ then $I_{q,0^+}^{\alpha}f\in L_q^2(X)$ and
\be\label{ineq3-2}
\norm{I_{q,0^+}^{\alpha}f}_2\leq K_{\alpha}\norm{f},\quad K_{\alpha}:=\sqrt{a}M_{\alpha,2}.
\ee
\item If $f\in C(A_{q,a}^*)$ then $I_{q,a^-}^{\alpha}f\in C(A_{q,a}^*)$ and
\[ \norm{I_{q,a^-}^{\alpha}f}\leq c_{\alpha,0} \norm{f},\quad c_{\alpha,0}:=\frac{a^{\alpha}(1-q)^{\alpha}}{(1-q^{\alpha})(q;q)_{\infty}}.\]
\item If $f\in L_q^1(X)$ then $I_{q,a^-}^{\alpha}f\in L_q^1(X)$ and
\[\norm{I_{q,a^-}^{\alpha}f}_1\leq \left\{\begin{array}{cc}\dfrac{(1-q)^{\alpha}a^{\alpha}}{(1-q^{\alpha})(q;q)_{\infty}}\norm{f}_1,& \mbox{if}\,\alpha<1,\\&\\
\dfrac{(1-q)^{\alpha-1}a^{\alpha-1}}{(q;q)_{\infty}}\norm{f}_1,& \mbox{if}\,\alpha \geq 1.\end{array}\right.\]
\item If $\alpha \neq \frac{1}{2}$  and $f\in L_q^2(X)$ then $I_{q,a^-}^{\alpha}f\in L_q^1(X)$ and
\[\norm{I_{q,a^-}^{\alpha}f}_2\leq \left\{\begin{array}{cc}\dfrac{(1-q)^{\alpha-\frac{1}{2}}a^{\alpha}}{\sqrt{1-q^{2\alpha-1}}(q;q)_{\infty}}\norm{f}_2,& \mbox{if}\,\alpha<\frac{1}{2},\\&\\
\dfrac{(1-q)^{\alpha}a^{\alpha}}{(q;q)_{\infty}\sqrt{(1-q^{2\alpha-1})(1-q^{2\alpha})}}\norm{f}_2,& \mbox{if}\,\alpha > \frac{1}{2}.\end{array}\right.\]
\end{enumerate}
The following lemmas are introduced and proved in~\cite{Zeinab_FSL1}
\begin{lem}\label{Lem:1}Let $\alpha>0$. If
\begin{enumerate}
\item[(a)]   $f\in L_q^1(X)$ and $g$ is a bounded function on $A_{q,a}$,

or
\item[(b)] $\alpha\neq \frac{1}{2}$ and $f,\,g$ are $L_q^2(X)$ functions
\end{enumerate}
 then
\be\label{I2}
\int_{0}^{a}g(x)I_{q,0^{+}}^{\alpha}f(x)\,d_qx
=\int_{0}^{a}f(x)I_{q,{a^-}}^{\alpha}g(x)\,d_qx.\ee

\end{lem}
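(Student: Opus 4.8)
The plan is to prove the identity \eqref{I2} by reducing it to a $q$-analogue of the Dirichlet/Fubini interchange of the order of integration, which is the natural engine behind all ``fractional integration by parts'' formulas of this type. First I would substitute the definitions \eqref{LS:OP} and \eqref{RS:OP} directly into both sides. On the left, $g(x)I_{q,0^+}^{\alpha}f(x)$ expands into a double Jackson integral in which the inner variable $t$ runs from $0$ to $x$ while $x$ runs from $0$ to $a$; on the right, $f(x)I_{q,a^-}^{\alpha}g(x)$ expands into a double integral in which the inner variable runs from $qx$ up to $a$. The kernels are $(qt/x;q)_{\alpha-1}$ and $t^{\alpha-1}(qx/t;q)_{\alpha-1}$ respectively, so the whole point of the argument is to show that after interchanging the order of the two Jackson integrations the integrand of the left-hand side is carried exactly onto that of the right-hand side.

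The key step is therefore a careful bookkeeping of the $q$-discretization. Writing each Jackson integral as an explicit sum over the grid $A_{q,a}$ (i.e. $\int_0^a h\,d_qx = (1-q)a\sum_{n\ge 0} q^n h(aq^n)$), the left-hand side becomes a double sum over pairs $(m,n)$ with the constraint $t=aq^{j}\le x=aq^{i}$, that is $j\ge i$. Swapping the roles of the summation indices turns the triangular region $\{t\le x\}$ into $\{x\le t\}$ up to the shift produced by the $qx$ lower limit on the right-hand operator, and one checks that the power $x^{\alpha-1}$ from $I_{q,0^+}^{\alpha}$ recombines with $t^{\alpha-1}$ on the right so that the two kernels match pointwise on the grid. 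I would do this matching once, cleanly, for the region where both sums are absolutely convergent, and then invoke the convergence hypotheses to justify the rearrangement.

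That convergence justification is exactly where the two cases (a) and (b) enter, and it is the main obstacle. Under hypothesis (a), $f\in L_q^1(X)$ and $g$ bounded, the double sum is dominated by $\norm{g}_\infty$ times a single copy of $\int_0^a |f(x)| I_{q,0^+}^{\alpha}1(x)\,d_qx$, which is finite by the $L_q^1$ mapping estimate \eqref{ineq3}; absolute convergence then licenses the interchange by the $q$-analogue of Tonelli's theorem. Under hypothesis (b), with $\alpha\neq \tfrac12$ and $f,g\in L_q^2(X)$, one instead applies the Cauchy--Schwarz inequality together with the $L_q^2$ boundedness \eqref{ineq2} of $I_{q,0^+}^{\alpha}$, which guarantees that $\int_0^a g\,I_{q,0^+}^{\alpha}f\,d_qx$ is an absolutely convergent object; the hypothesis $\alpha\neq\tfrac12$ is precisely what makes the constant $M_{\alpha,2}$ finite. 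I expect the bulk of the real work to lie in organizing this absolute-convergence estimate so that Fubini applies, rather than in the algebraic kernel identity, which is essentially forced once the grid is written out.
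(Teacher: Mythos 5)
First, a point of comparison: the paper itself contains no proof of Lemma~\ref{Lem:1} --- it is quoted as ``introduced and proved in~\cite{Zeinab_FSL1}'' --- so your reconstruction can only be measured against the standard argument, which is indeed the one you outline. That outline is correct: expanding \eqref{LS:OP} and \eqref{RS:OP} as Jackson sums, both sides of \eqref{I2} become the double sum over pairs $(x,t)=(aq^{n},aq^{j})$ with $n\le j$ of the \emph{identical} term $q^{n+j}(aq^{n})^{\alpha-1}(q^{j-n+1};q)_{\alpha-1}\,g(aq^{n})f(aq^{j})$, up to the common factor $a^{2}(1-q)^{2}/\Gamma_q(\alpha)$; the lower limit $qx$ in \eqref{RS:OP} is exactly what makes the two triangular index regions coincide, diagonal included. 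Once absolute convergence of this double sum is established, Fubini for double series finishes the proof, and your case (a) reasoning (bound $|g|$ by $\norm{g}$, reduce to an $L_q^1$ estimate for the fractional integral of $|f|$) is sound in substance.

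Two concrete corrections are needed, however. The more serious one is in case (b): you assert that ``the hypothesis $\alpha\neq\tfrac12$ is precisely what makes the constant $M_{\alpha,2}$ finite.'' This is false. The constant $M_{\alpha,2}$ of \eqref{ineq2} involves only $\sqrt{(1-q)/(1-q^{2\alpha})}$ and a finite Jackson integral of the bounded kernel $(q\xi;q)_{\alpha-1}^{2}$, so it is finite for \emph{every} $\alpha>0$, with no singularity at $\alpha=\tfrac12$. The restriction $\alpha\neq\tfrac12$ in the lemma is tied instead to the $L_q^2$ mapping bounds for the \emph{right-sided} operator $I_{q,a^-}^{\alpha}$ (item 8 in the paper's list), whose constants carry $\sqrt{1-q^{2\alpha-1}}$ in the denominator and degenerate at $\alpha=\tfrac12$. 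Your own estimate, namely Cauchy--Schwarz followed by \eqref{ineq2} applied to $|f|$, i.e.\ $\int_{0}^{a}|g|\,I_{q,0^+}^{\alpha}|f|\,d_qx\le\norm{g}_2\,M_{\alpha,2}\norm{f}_2$, routes everything through the left-sided operator and never uses $\alpha\neq\tfrac12$ at all; that is logically harmless (you prove the identity under a weaker hypothesis), but the stated justification must be corrected. The second, minor, slip is in case (a): after bounding $|g|$ by $\norm{g}$, the quantity dominating the double sum is $\int_{0}^{a}|f(x)|\,\bigl(I_{q,a^-}^{\alpha}\mathbf{1}\bigr)(x)\,d_qx$ (or, grouping the other way, $\norm{g}\,\norm{I_{q,0^+}^{\alpha}|f|}_1$, finite by \eqref{ineq3}), not $\int_{0}^{a}|f(x)|\,\bigl(I_{q,0^+}^{\alpha}1\bigr)(x)\,d_qx$: the function $I_{q,a^-}^{\alpha}\mathbf{1}$ does not vanish as $x\to0$ while $I_{q,0^+}^{\alpha}1(x)=x^{\alpha}/\Gamma_q(\alpha+1)$ does, so the latter is not a pointwise majorant. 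Since both majorants are finite, the conclusion stands once the bookkeeping is repaired.
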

\begin{lem} Let $\alpha\in(0,1)$.
\begin{itemize}
 \item[(a)] If $g\in L_q^1(A_{q,a}^*)$ such that $I_{q}^{1-\alpha}g\in\mathcal{A}C_q(A_{q,a}^*)$, and  $D_q^i f\in C(A_{q,a}^*)$ ($i=0,1$) then
\be\label{CP}
\int_{0}^{a}f(x)D_{q,0^+}^{\alpha}g(x)\,d_qx=-f(\frac{x}{q})I_{q,0^+}^{1-\alpha}g(x)\Big|_{x=0}^a+\int_{0}^{a}g(x) \CM^{\alpha}f(x)\,d_qx.
\ee
\item[(b)] If $f\in\mathcal{A}C_q(A_{q,a}^*)$, and $g$ is a bounded function on $A_{q,a}^*$ such that $D_{q,a^-}^{\alpha}g\in L_q^1(A_{q,a}^*)$ then
\be\label{CP3}
\int_{0}^{a}g(x){}^cD_{q,0^+}^{\alpha}f(x)\,d_qx=\left(I_{q,a^-}^{1-\alpha}g\right)(\frac{x}{q})f(x)\Big|_{x=0}^a+
\int_{0}^{a}f(x) D_{q,a^-}^\alpha g(x)\,d_qx.
\ee
\end{itemize}
\end{lem}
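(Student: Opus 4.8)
The plan is to reduce both identities to the ordinary $q$-integration by parts rule \eqref{qIR} together with the adjointness relation \eqref{I2} of Lemma~\ref{Lem:1}, after expressing the order-$\alpha$ operators through a single Jackson derivative (since $\alpha\in(0,1)$ the integer order is $m=1$). The one algebraic fact I would record at the outset is the shift identity
\[
(D_qf)(x)=(D_{q^{-1}}f)(qx),\qquad x\in A_{q,a}^*,
\]
which follows at once from the definitions of $D_q$ and $D_{q^{-1}}$. It is exactly what converts the forward Jackson derivative produced by \eqref{qIR} into the backward derivative $D_{q^{-1}}$ that sits inside the right-sided operators $\CM^{\alpha}$ and $D_{q,a^-}^{\alpha}$, and it accounts for the argument $x/q$ appearing in both boundary terms.

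For part (a) I would start from $D_{q,0^+}^{\alpha}g=D_q\,I_{q,0^+}^{1-\alpha}g$, so that
\[
\int_{0}^{a}f(x)D_{q,0^+}^{\alpha}g(x)\,d_qx=\int_{0}^{a}f(x)\,D_q\left(I_{q,0^+}^{1-\alpha}g\right)(x)\,d_qx.
\]
Applying \eqref{qIR} transfers the Jackson derivative onto $f$ and leaves an integral pairing $D_qf$ against $I_{q,0^+}^{1-\alpha}g$ evaluated at the shifted point $qx$. Using the shift identity to turn $D_qf$ into $D_{q^{-1}}f$ and reorganizing the resulting sums with the dilation property of the Jackson integral collapses the endpoint pieces into the boundary term of \eqref{CP} and produces the integral $-\tfrac{1}{q}\int_{0}^{a}(D_{q^{-1}}f)(x)\,I_{q,0^+}^{1-\alpha}g(x)\,d_qx$. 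Moving the left-sided integral $I_{q,0^+}^{1-\alpha}$ onto the other factor by \eqref{I2} rewrites this as $-\tfrac{1}{q}\int_{0}^{a}g(x)\,I_{q,a^-}^{1-\alpha}(D_{q^{-1}}f)(x)\,d_qx=\int_{0}^{a}g(x)\CM^{\alpha}f(x)\,d_qx$, the last equality being the definition of the right-sided Caputo derivative. The hypotheses $I_q^{1-\alpha}g\in\mathcal{A}C_q(A_{q,a}^*)$ and $f,D_qf\in C(A_{q,a}^*)$ are precisely what guarantee that $D_{q,0^+}^{\alpha}g$ exists, that the endpoint values are finite, and that all Jackson sums converge.

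Part (b) I would run in the mirror order. Writing $\CZ^{\alpha}f=I_{q,0^+}^{1-\alpha}D_qf$, I would first apply \eqref{I2} (its hypothesis (a) holds since $f\in\mathcal{A}C_q$ forces $D_qf\in L_q^1$ and $g$ is bounded) to obtain
\[
\int_{0}^{a}g(x)\CZ^{\alpha}f(x)\,d_qx=\int_{0}^{a}(D_qf)(x)\,\left(I_{q,a^-}^{1-\alpha}g\right)(x)\,d_qx,
\]
and only then apply \eqref{qIR} to move the derivative off $f$. Now the shift falls on $I_{q,a^-}^{1-\alpha}g$, giving the boundary term of \eqref{CP3}, while the residual integral is identified with $\int_{0}^{a}f(x)D_{q,a^-}^{\alpha}g(x)\,d_qx$ once $-\tfrac{1}{q}D_{q^{-1}}I_{q,a^-}^{1-\alpha}g$ is recognized as $D_{q,a^-}^{\alpha}g$.

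I expect the real difficulty to lie entirely in the bookkeeping of shifts, signs, and endpoint behaviour rather than in any analytic estimate. Because the right-sided operators are built from $D_{q^{-1}}$ and from evaluations at $x/q$, after \eqref{qIR} one must match the shifted factors exactly and carry the constant $-1/q$ through the definitions without error. The endpoints deserve particular attention: in (b) the upper contribution is harmless because $\left(I_{q,a^-}^{1-\alpha}g\right)(a/q)$ is an empty right-sided integral and hence vanishes, whereas in (a) the upper contribution genuinely brings in the value $f(a/q)$ at the point just outside the grid, which must be accommodated; the lower endpoint at $0$ must in turn be shown finite using the $q$-regularity at zero supplied by the $\mathcal{A}C_q$ hypotheses. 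Lining up each of these shifts and signs is where the work lies.
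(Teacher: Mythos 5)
The paper never proves this lemma: it is quoted verbatim with the remark that it is ``introduced and proved in \cite{Zeinab_FSL1}'', so your proposal can only be measured against the natural intended argument. Your route is exactly that argument: reduce to $m=1$, apply Jackson integration by parts, use the shift identity $(D_qf)(x)=(D_{q^{-1}}f)(qx)$ to manufacture the backward derivative, and transfer $I_{q,0^+}^{1-\alpha}$ across the integral by \eqref{I2}. Your part (b) is run in the right order, with the correct case of Lemma~\ref{Lem:1} invoked ($D_qf\in L_q^1$, $g$ bounded), and your observation that $\bigl(I_{q,a^-}^{1-\alpha}g\bigr)(a/q)=0$ because the right-sided integral from $a$ to $a$ is empty is correct; that half of the lemma is sound.

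The genuine gap is in part (a), at precisely the step you defer as ``bookkeeping''. You assert that after integration by parts the endpoint pieces ``collapse into the boundary term of \eqref{CP}'', but you never perform that Abel summation, and performing it gives the \emph{opposite} sign. The correct $q$-integration by parts rule is $\int_0^a\phi\,D_q\psi\,d_qx=\phi\psi\big|_0^a-\int_0^aD_q\phi(x)\,\psi(qx)\,d_qx$ (the printed \eqref{qIR} has $+$ where it must be $-$, a misprint your sketch silently steps around), and combining it with the shift identity yields
\[
\int_0^a f(x)\,D_q\psi(x)\,d_qx=f(x/q)\psi(x)\Big|_{x=0}^{a}-\frac1q\int_0^a \left(D_{q^{-1}}f\right)(x)\,\psi(x)\,d_qx,
\]
so that with $\psi=I_{q,0^+}^{1-\alpha}g$ and \eqref{I2} one arrives at
\[
\int_0^a f(x)D_{q,0^+}^{\alpha}g(x)\,d_qx=+\,f(x/q)I_{q,0^+}^{1-\alpha}g(x)\Big|_{x=0}^{a}+\int_0^a g(x)\,\CM^{\alpha}f(x)\,d_qx,
\]
with a plus, not the minus appearing in \eqref{CP}. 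A one-line test confirms the plus is the true sign: take $f\equiv g\equiv1$; then $\CM^{\alpha}f=0$ and $D_{q,0^+}^{\alpha}1=x^{-\alpha}/\Gamma_q(1-\alpha)$, so the left side equals $a^{1-\alpha}/\Gamma_q(2-\alpha)>0$, whereas the right side of \eqref{CP} as printed equals $-a^{1-\alpha}/\Gamma_q(2-\alpha)$. Thus either \eqref{CP} carries a sign misprint (most likely, given the misprint already present in \eqref{qIR}; note the paper itself only ever invokes \eqref{CP3}, whose sign is correct), in which case an honest execution of your plan proves the corrected identity rather than the stated one, or the stated identity is simply false and unprovable. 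Either way, your writeup claims agreement with \eqref{CP} at exactly the point where agreement fails; resolving that discrepancy was the one piece of work the proof required, and it is missing.
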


\section{{\bf Basic Fourier series on $q$-Linear grid and some properties}}
The purpose of this section is to reformulate Cardoso's results of Fourier series expansions for functions defined on  the
$q$-linear grid $\mathcal{A}_q:=\set{q^{n},\;n\in\mathbb{N}_0}$ to functions defined on $q$-linear grids
 $\mathcal{A}_{q,a}:=:=\set{\pm a q^{n},\;n\in\mathbb{N}_0}$, $a>0$.
 Cardoso in \cite{Cardoso011} defined the space of all $q$-linear H\"{o}lder functions on the $q$-linear grid $\mathcal{A}_q$.
We generalize his definition for functions defined on a $q$-linear grid of the form  $\mathcal{A}_{q,a}$, $a>0$.

\begin{defn}
A function $f$ defined on $\mathcal{A}_{q,a}$, $a>0$, is called a  $q$-linear H\"{o}lder of order $\lambda$ if there exists a constant $M>0$ such that
\[
\left|f(\pm aq^{n-1})-f(\pm aq^n)\right|\leq Mq^{n\lambda}, \mbox{for all}\;n\in\mathbb{N}.
\]
\end{defn}

\begin{defn}
The $q$-trigonometric functions $S_q(z)$ and $C_q(z)$ are defined for $z\in\mathbb{C}$ by, see~\cite{BC,Cardoso011}
\begin{eqnarray*}
S_q(z)&=&\sum_{n=0}^{\infty}(-1)^n \dfrac{q^{n(n+\frac{1}{2})}z^{2n+1}}{(q;q)_{2n+1}}=\frac{z}{1-q}{}_1\phi_1\left(0;q^3;q^,q^{3/2}z^2\right),\\
C_q(z)&=&\sum_{n=0}^{\infty}(-1)^n \dfrac{q^{n(n-\frac{1}{2})}z^{2n}}{(q;q)_{2n}}={}_1\phi_1\left(0;q;q^,q^{1/2}z^2\right).
\end{eqnarray*}
\end{defn}
One can verify that
\begin{eqnarray*}D_{q,z} S_q(wz)&=&\frac{w}{1-q}C_q(\sqrt{q}z),\\
 D_{q,z} C_q(wz)&=&-\frac{w}{1-q}S_q(\sqrt{q}z),
 \end{eqnarray*}
where $z\in\mathbb{C}$  and $w\in\mathbb{C}$ is a fixed parameter.
A modification of the orthogonality relation given in \cite[Theorem 4.1]{BC} is

\begin{thm} Let $w$ and $w'$ be roots of $S_q(z)$, and
$\mu(w):=(1-q)C_q(q^{1/2}w)S'_q(w)$. Then
\begin{eqnarray*}
\int_{-a}^{a}C_q(\dfrac{q^{\frac{1}{2}}wx}{a})C_q(\dfrac{q^{\frac{1}{2}}w'x}{a})\,d_qx&=&\left\{\begin{array}{cc}0,&\mbox{if}\,w\neq w',\\
2a,&\mbox{if}\, w=w'=0,\\
a\mu(w),&\mbox{if}\,w=w'\neq 0,\end{array}\right.
\\
\int_{-a}^{a}S_q(\dfrac{qwx}{a})S_q(\dfrac{qw'x}{a})\,d_qx&=&\left\{\begin{array}{cc}0,&\mbox{if}\,w\neq w',\\
aq^{-1/2}\mu(w),&\mbox{if}\, w= w'.
\end{array}\right.
\end{eqnarray*}

\end{thm}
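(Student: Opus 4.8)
The plan is to reduce the two stated identities to the $a=1$ case established in [BC, Theorem 4.1] by a single change of scale, and then to treat separately the constant case $w=w'=0$, which is the genuinely new ingredient (the ``modification''). First I would record the homogeneity of the Jackson integral over a symmetric $q$-geometric interval. Directly from the defining series one has $\int_{-a}^{a} f(x)\,d_qx = a(1-q)\sum_{n=0}^{\infty} q^{n}\bigl(f(aq^{n})+f(-aq^{n})\bigr)$, whence for every $f$ for which the series converges
\[
\int_{-a}^{a} f(x)\,d_qx = a\int_{-1}^{1} f(au)\,d_qu .
\]
The point of the substitution $x=au$ is that it carries the grid $\set{\pm aq^{n}}\cup\set{0}$ bijectively onto $\set{\pm q^{n}}\cup\set{0}$ and simultaneously collapses the arguments, since $q^{1/2}wx/a = q^{1/2}wu$ and $qwx/a = qwu$ are exactly the $a=1$ forms appearing in [BC].

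Next I would apply this to each integrand. For the cosine relation, taking $f(x)=C_q(q^{1/2}wx/a)C_q(q^{1/2}w'x/a)$ gives $\int_{-a}^{a} f\,d_qx = a\int_{-1}^{1} C_q(q^{1/2}wu)C_q(q^{1/2}w'u)\,d_qu$, and [BC, Theorem 4.1] evaluates the latter integral as $0$ when $w\neq w'$ and as $\mu(w)$ when $w=w'\neq 0$; multiplying by $a$ yields $0$ and $a\mu(w)$, respectively. Here one must note that $\mu(w)=(1-q)C_q(q^{1/2}w)S_q'(w)$ depends only on the root $w$ and not on $a$, so no spurious scale factor is introduced. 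The same substitution applied to $f(x)=S_q(qwx/a)S_q(qw'x/a)$ reduces the sine relation to $a\int_{-1}^{1}S_q(qwu)S_q(qw'u)\,d_qu$, and [BC] gives $0$ for $w\neq w'$ and $aq^{-1/2}\mu(w)$ for $w=w'$.

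Finally I would dispose of the remaining case $w=w'=0$ by hand. Since $0$ is a root of $S_q$ (indeed $S_q(0)=0$) and $C_q(0)=1$, the cosine integrand reduces to the constant $1$, and $\int_{-a}^{a}\,d_qx = a(1-q)\sum_{n=0}^{\infty}2q^{n}=2a$, which is the value recorded in the theorem; for the sine the integrand vanishes identically, consistent with the absence of this case there.

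I do not expect a serious obstacle along the scaling route, provided [BC, Theorem 4.1] is indeed stated in the normalized form $a=1$; the only points requiring care are the verification of the integral-scaling identity, the cancellation of $a$ inside the $q$-trigonometric arguments, and the $a$-independence of $\mu$. Should one instead want a self-contained argument, one would derive a second-order $q$-difference equation for $C_q(q^{1/2}wx/a)$ from the stated derivative rules and integrate by parts via \eqref{qIR}. The main difficulty there is that the derivative formulas shift the argument by a factor $\sqrt{q}$, so that $D_q$ applied to $C_q(q^{1/2}wx/a)$ produces a sine term whose $D_q$ in turn produces a cosine at the shifted argument $q^{3/2}wx/a$ rather than the original $q^{1/2}wx/a$; this obstructs closing a clean eigenvalue equation at a single argument, and one would additionally have to kill the boundary contributions at $x=\pm a$ using $S_q(w)=0$.
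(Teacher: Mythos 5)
Your proposal is correct and is essentially the paper's own approach: the paper gives no written proof of this theorem, presenting it as ``a modification of'' the orthogonality relation in [BC, Theorem 4.1], and that modification is exactly your rescaling $x=au$, under which the Jackson integral over the symmetric grid scales by the factor $a$, the arguments $q^{1/2}wx/a$ and $qwx/a$ collapse to the $a=1$ forms, and $\mu(w)$ carries no $a$-dependence. Your separate verification of the case $w=w'=0$ (where $C_q(0)=1$ gives $\int_{-a}^{a}d_qx=2a$) correctly supplies the only detail not delivered verbatim by the scaling.
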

Cardoso introduced a sufficient condition for the uniform convergence of the basic Fourier series
\[
S_q(f):=\frac{a_0}{2}+\sum_{k=1}^{\infty}a_k \text{C}_q(q^{1/2}w_k x)+b_k\text{S}_q(qw_kx),
\]
where $a_0=\int_{-1}^{1}f(t)\,d_qt$ and for $k=1,2,\ldots$,
\[
a_k=\frac{1}{\mu_k}\int_{-1}^{1}f(t)C_q(q^{1/2}w_k t)\,d_qt,\quad b_k=\frac{1}{\mu_k}\int_{-1}^{1}f(t)S_q(q w_k t)\,d_qt,\]
\[\mu_k=(1-q)C_q(q^{1/2}w_k)S_q'(w_k)\]
on the $q$-linear grid $\mathcal{A}_{q}$, where $\set{w_k\;:\;k\in\mathbb{N}}$ is the set of positive zeros of $S_q(z)$.
Cardoso proved that $\mu_k=O(q^{-2k^2})$ as $k\to\infty$ for any $q\in(0,1)$.
 In the following we give a modified version of Cardoso's result for any function defined on the $q$-linear grid $\mathcal{A}_{q,a}$, $a>0$.
\begin{thm}\label{MR}
If $f\in C(\mathcal{A}_{q,a}^*)$ is a $q$-linear H\"{o}lder function of order $\lambda>\frac{1}{2}$, then the $q$-Fourier series
\be\label{qfs}
S_q(f):=\frac{a_0}{2}+\sum_{k=1}^{\infty}a_k \text{C}_q(q^{1/2}\frac{w_k x}{a})+b_k\text{S}_q(q\frac{w_k\,x}{a}),
\ee
where
$a_0=\frac{1}{a}\int_{-a}^{a}f(t)\,d_qt$ and for $k=1,2,\ldots$,
\[
a_k(f)=\frac{1}{a\mu_k}\int_{-a}^{a}f(t)C_q(q^{1/2}\frac{w_k t}{a})\,d_qt,\;\;
b_k(f)=\frac{\sqrt{q}}{a\mu_k}\int_{-a}^{a}f(t)S_q(q \frac{w_k t}{a})\,d_qt,
\]
converges uniformly to the function $f$ on the $q$-linear grid $\mathcal{A}_{q,a}$.
\end{thm}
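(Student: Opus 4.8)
The plan is to reduce the statement to Cardoso's theorem on the standard grid $\mathcal{A}_q$ (the case $a=1$, described in the prose preceding the statement) by means of a single dilation. First I would introduce the rescaled function $g:\mathcal{A}_q^*\to\mathbb{R}$, $g(s):=f(as)$, which is well defined because $as\in\mathcal{A}_{q,a}^*$ whenever $s\in\mathcal{A}_q^*$. The entire proof then amounts to transporting the three ingredients — the hypotheses on $f$, the Fourier coefficients, and the partial sums — through this dilation, so that uniform convergence on $\mathcal{A}_{q,a}$ follows at once from Cardoso's uniform convergence on $\mathcal{A}_q$.

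Next I would verify that $g$ satisfies Cardoso's hypotheses with the same data. The $q$-regularity at zero is immediate: for $s\in\mathcal{A}_q$ one has $\lim_{n\to\infty}g(sq^n)=\lim_{n\to\infty}f((as)q^n)=f(0)=g(0)$ since $f\in C(\mathcal{A}_{q,a}^*)$. For the H\"older condition, $\abs{g(\pm q^{n-1})-g(\pm q^n)}=\abs{f(\pm aq^{n-1})-f(\pm aq^n)}\leq Mq^{n\lambda}$ for all $n\in\mathbb{N}$, so $g$ is a $q$-linear H\"older function of the same order $\lambda>\frac12$ with the same constant $M$. Hence Cardoso's theorem applies to $g$, and its $q$-Fourier series on $\mathcal{A}_q$ converges uniformly to $g$.

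It remains to identify the two series. Using the scaling property of the Jackson integral, $\int_{-a}^{a}h(t)\,d_qt=a\int_{-1}^{1}h(as)\,d_qs$, the substitution $t=as$ yields $a_0(f)=\int_{-1}^{1}g(s)\,d_qs$, $a_k(f)=\frac{1}{\mu_k}\int_{-1}^{1}g(s)C_q(q^{1/2}w_ks)\,d_qs$, and $b_k(f)=\frac{\sqrt q}{\mu_k}\int_{-1}^{1}g(s)S_q(qw_ks)\,d_qs$; these are precisely the Fourier coefficients of $g$, the factor $\sqrt q$ in $b_k$ being dictated by the asymmetric normalization $aq^{-1/2}\mu(w)$ of the sine orthogonality relation in the theorem preceding this one (at $a=1$). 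Moreover, writing an arbitrary point of $\mathcal{A}_{q,a}$ as $x=as$ with $s\in\mathcal{A}_q$, the basis functions collapse, $C_q(q^{1/2}w_kx/a)=C_q(q^{1/2}w_ks)$ and $S_q(qw_kx/a)=S_q(qw_ks)$, so the $N$-th partial sum of $S_q(f)$ evaluated at $x$ equals the $N$-th partial sum of $S_q(g)$ evaluated at $s$. Consequently $\sup_{x\in\mathcal{A}_{q,a}}\abs{f(x)-S_q(f)(x)}=\sup_{s\in\mathcal{A}_q}\abs{g(s)-S_q(g)(s)}$, and the right-hand side tends to zero by the previous step, which is the assertion.

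The main obstacle is bookkeeping rather than analysis: one must confirm that every normalizing constant survives the dilation correctly — in particular that the $\sqrt q$ appearing in $b_k(f)$ exactly matches the $q^{-1/2}$ in the sine orthogonality relation and is neither doubled nor dropped, and that the dilation factor $a$ cancels cleanly in each coefficient (it does, since the single power of $a$ from the integral scaling is absorbed by the explicit $1/a$ in the definitions of $a_0,a_k,b_k$). The genuine analytic content — that the H\"older order $\lambda>\frac12$ forces the coefficients to decay fast enough to defeat the growth $\mu_k=O(q^{-2k^2})$ and produce uniform convergence — is inherited wholesale from Cardoso's theorem and is not reproved here.
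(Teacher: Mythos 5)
Your route is genuinely different from the paper's: the paper obtains Theorem~\ref{MR} by redoing Cardoso's argument on the scaled grid (its proof is recorded as ``a modification of the proof of [Cardoso, Theorem 4.1]'' and omitted), whereas you invoke Cardoso's theorem as a black box and transport everything through the dilation $g(s)=f(as)$. The computational part of your reduction is correct: the Jackson-integral scaling $\int_{-a}^{a}h(t)\,d_qt=a\int_{-1}^{1}h(as)\,d_qs$ does identify the coefficients of $f$ with those of $g$, the partial sums match under $x=as$, and you are right that the factor $\sqrt{q}$ in $b_k$ is the one consistent with the sine orthogonality relation $\int_{-a}^{a}S_q(qwx/a)S_q(qw'x/a)\,d_qx=aq^{-1/2}\mu(w)$ (the coefficient quoted in the paper's prose for $a=1$, without $\sqrt{q}$, is not consistent with that relation).

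The gap is in the sentence ``Hence Cardoso's theorem applies to $g$.'' What you verify for $g$ is $q$-regularity at zero, i.e.\ $\lim_{n\to\infty}g(\pm q^{n})=g(0)$, limits taken along the grid only. That is not Cardoso's hypothesis. As the remark immediately following Theorem~\ref{MR} explains, Cardoso assumes $f(0^{+})=f(0^{-})$ with $f(0^{\pm}):=\lim_{x\to 0^{\pm}}f(x)$, limits along the real line, and his proof uses the values $f(\pm q^{n-1/2})$ --- points that do not belong to $\mathcal{A}_{q}$ --- so his theorem concerns functions defined on a set strictly larger than the grid. Your $g$ is defined only on $\mathcal{A}_{q}^{*}$, so Cardoso's theorem, as stated, does not apply to it; and since there exist functions that are $q$-regular at zero but not continuous at zero (the paper cites one), the weakening of the zero-hypothesis is genuine content of Theorem~\ref{MR}, not bookkeeping. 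In effect your reduction assumes the $a=1$ case of the very statement being proved. The gap is fixable with one additional step: extend $g$ from $\mathcal{A}_{q}^{*}$ to $[-1,1]$, say piecewise linearly between consecutive grid points. The extension $\tilde{g}$ agrees with $g$ at the points $\pm q^{n}$, hence has the same $q$-linear H\"{o}lder data and the same $q$-Fourier coefficients (Jackson integrals sample only the grid), and $q$-regularity of $g$ gives $\tilde{g}(0^{+})=\tilde{g}(0^{-})=g(0)$; applying Cardoso's theorem to $\tilde{g}$ and restricting to the grid yields the claim for $g$, after which your dilation argument finishes the proof. Alternatively one accepts redoing Cardoso's proof under the weaker hypothesis, which is exactly the paper's (omitted) route.
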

\begin{proof}
The proof is a modification of  the proof of \cite[Theorem 4.1]{Cardoso011} and is omitted.
\end{proof}
\begin{rem}
We replaced the condition $f(0^+)=f(0^-)$  where
\[f(0^+):=\lim_{x\to 0^+} f(x),\quad f(0^-):=\lim_{x\to 0^-} f(x),\]
 in  \cite[Theorem 4.1]{Cardoso011} by the weakest condition that $f$ is $q$-regular at zero.
Because he needs this condition only to guarantee that  $\lim_{n\to\infty} f(q^{n-1/2})=\lim _{n\to\infty} f(-q^{n-1/2})$ and this holds if $f$is $q$-regular at zero. See~\cite[(1.22)]{AMbook}  for a function which is $q$-regular at zero but not continuous at zero.
\end{rem}

A modified version of ~\cite[Theorem 3.5]{Cardoso011} is
\begin{thm}\label{Theorem B}
If there exists  $c>1$  such that
\[\int_{-a}^{a}f(t)\text{C}_q(\sqrt{q}\frac{w_k t}{a})=O(q^{ck})\;\mbox{and}\;\int_{-a}^{a}f(t)\text{S}_q(q\frac{w_k t}{a})=O(q^{ck})\quad \mbox{as}\;k\to\infty,\]
then the $q$-Fourier series \eqref{qfs} converges uniformly on $\mathcal{A}_{q,a}$.
\end{thm}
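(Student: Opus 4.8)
The plan is to prove uniform convergence of the $q$-Fourier series \eqref{qfs} by showing that its partial sums form a uniformly Cauchy sequence on $\mathcal{A}_{q,a}$, using the growth rate $\mu_k = O(q^{-2k^2})$ established by Cardoso together with the hypothesized decay $O(q^{ck})$ with $c>1$. First I would recall that $\mu_k=(1-q)C_q(q^{1/2}w_k)S_q'(w_k)$ appears in the denominators of the coefficients $a_k(f)$ and $b_k(f)$, so that under the assumption
\[
\int_{-a}^{a}f(t)C_q(\sqrt{q}\tfrac{w_k t}{a})\,d_qt = O(q^{ck}),\qquad
\int_{-a}^{a}f(t)S_q(q\tfrac{w_k t}{a})\,d_qt = O(q^{ck}),
\]
one obtains $a_k(f) = O(q^{ck}\mu_k^{-1}) = O(q^{ck}q^{2k^2})$ and similarly for $b_k(f)$. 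Because $q\in(0,1)$ and the factor $q^{2k^2}$ dominates, these coefficients decay superexponentially as $k\to\infty$.

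Next I would bound the $q$-trigonometric basis functions uniformly on the grid. The key observation is that $C_q$ and $S_q$, evaluated at the points $\pm a q^n$ of $\mathcal{A}_{q,a}$, remain bounded in $k$: since $w_k$ grows like $q^{-k}$ (being the positive zeros of $S_q$), the relevant arguments $q^{1/2}w_k x/a$ and $q w_k x/a$ with $x=\pm aq^n$ take values whose moduli are controlled, and the defining power series for $C_q$ and $S_q$ converge to values bounded by a constant independent of $k$ on the grid. Granting a uniform bound $\abs{C_q(q^{1/2}w_k x/a)}\leq B$ and $\abs{S_q(qw_k x/a)}\leq B$ for all $x\in\mathcal{A}_{q,a}$ and all $k$, the general term of the series is dominated by $B(\abs{a_k(f)}+\abs{b_k(f)}) = O(q^{ck+2k^2})$. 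The Weierstrass $M$-test then yields absolute and uniform convergence of $S_q(f)$ on $\mathcal{A}_{q,a}$, since $\sum_k q^{ck+2k^2}<\infty$.

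The main obstacle I anticipate is making the uniform boundedness of the basis functions on $\mathcal{A}_{q,a}$ rigorous, since the arguments $w_k x/a$ grow with $k$ even though $x$ ranges over the discrete $q$-geometric grid; one must exploit the rapid decay of the coefficients $q^{n(n\pm 1/2)}/(q;q)_n$ in the series for $C_q$ and $S_q$ together with the precise asymptotics $w_k\sim q^{-k}$ to confirm that the products $q^{2k^2}\cdot(\text{basis value})$ stay summable after accounting for the worst case $n=0$ (the endpoint $x=\pm a$). This is precisely where Cardoso's estimate in \cite[Theorem 3.5]{Cardoso011} does the heavy lifting, so I would lean on that argument, transplanting it from the grid $\mathcal{A}_q$ to $\mathcal{A}_{q,a}$ by the change of variable $t\mapsto t/a$; the verification that this rescaling preserves both the decay hypothesis and the uniform basis bound is the routine—but essential—part of the adaptation. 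Once the $M$-test applies, identifying the uniform limit with $f$ follows from completeness of the orthogonal system, exactly as in the proof of Theorem~\ref{MR}.
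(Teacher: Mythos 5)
There are two fatal quantitative errors in your argument. First, you invert a big-$O$ estimate: from $\mu_k=O(q^{-2k^2})$, which is an \emph{upper} bound on $|\mu_k|$, you conclude $\mu_k^{-1}=O(q^{2k^2})$ and hence $|a_k(f)|=O(q^{ck}q^{2k^2})$. That implication is invalid; to bound $1/\mu_k$ from above you need a \emph{lower} bound on $|\mu_k|$, which neither the paper nor your proposal supplies. Second, and more seriously, your ``key observation'' that $|C_q(q^{1/2}w_kx/a)|\le B$ and $|S_q(qw_kx/a)|\le B$ uniformly in $k$ on $\mathcal{A}_{q,a}$ is false. The zeros $w_k$ increase to infinity, so already at the endpoints $x=\pm a$ the arguments $q^{1/2}w_k$ run off to infinity, and these entire functions of order zero are unbounded there; more precisely, by the orthogonality relation of Section 3 one has $\int_{-a}^{a}C_q^2(q^{1/2}w_kx/a)\,d_qx=a\mu_k$, whence $\sup_{x\in\mathcal{A}_{q,a}}|C_q(q^{1/2}w_kx/a)|\ge\sqrt{\mu_k/2}$, and $\mu_k\to\infty$ superexponentially (the bound $O(q^{-2k^2})$ quoted in the paper is attained in order of magnitude, by the asymptotics in \cite{BC,Cardoso011}). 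So the Weierstrass majorant $O(q^{ck+2k^2})$ you build rests on two estimates that are both wrong in the needed direction.

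There is also a structural red flag you should have caught: your majorant $\sum_k q^{ck+2k^2}$ converges for \emph{every} real $c$, so your argument never uses the hypothesis $c>1$; if it were correct, the theorem would need no condition on $c$ at all. In Cardoso's actual proof of \cite[Theorem 3.5]{Cardoso011} --- which is the proof the paper intends, transplanted to $\mathcal{A}_{q,a}$ by the rescaling $t\mapsto t/a$ (the paper offers no independent proof) --- the decisive lemma is a bound on the basis functions \emph{relative to} $\mu_k$, of the form $\sup_{x}|C_q(q^{1/2}w_kx)|=O\bigl(q^{-k}\mu_k\bigr)$ and similarly for $S_q$; then the $k$-th term of \eqref{qfs} is $O(q^{ck})\cdot O(q^{-k})=O(q^{(c-1)k})$, and the $M$-test applies precisely because $c>1$. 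Your proposal replaces this lemma by the false bound $O(1)$ together with the unjustified $\mu_k^{-1}=O(q^{2k^2})$, and your fallback of ``leaning on Cardoso's Theorem 3.5'' is circular, since that theorem is, up to rescaling, exactly the statement to be proved; what can legitimately be imported is the lemma-level estimate above. A final small point: Theorem~\ref{Theorem B} asserts only uniform convergence of the series, not convergence to $f$, so your closing appeal to completeness to identify the limit is extraneous and, in this generality, unjustified.
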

A modified version of ~\cite[Corollary 4.3]{Cardoso011} is
\begin{cor}
If $f$ is continuous and piecewise smooth  on a neighborhood of the origin,
then the corresponding $q$-Fourier series $S_q(f)$ converges uniformly to $f$ on the set of points $\mathcal{A}_{q,a}$.
\end{cor}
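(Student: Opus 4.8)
\emph{Overview and choice of route.} The statement is a convergence\nobreakspace to\nobreakspace$f$ assertion, and two ready-made tools sit just above it: Theorem~\ref{MR}, which upgrades a $q$-linear H\"older bound of order $>\tfrac12$ (for a $q$-regular-at-zero function) into uniform convergence of \eqref{qfs} \emph{to} $f$, and Theorem~\ref{Theorem B}, which does the same from an $O(q^{ck})$ decay ($c>1$) of the two defining integrals. My preferred route is through Theorem~\ref{MR}. The plan is to show that a function $f$ which is continuous and piecewise smooth on a neighborhood $U$ of the origin is automatically $q$-regular at $0$ and $q$-linear H\"older of order $\lambda=1$ on the \emph{whole} grid $\mathcal{A}_{q,a}$, after which Theorem~\ref{MR} applies verbatim and yields exactly uniform convergence to $f$ on $\mathcal{A}_{q,a}$.

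\emph{Key steps.} First, $q$-regularity at $0$ is immediate from ordinary continuity at $0$: since $xq^{n}\to 0$, continuity forces $f(xq^{n})\to f(0)$, so $f\in C(\mathcal{A}_{q,a}^{*})$. Second comes the H\"older estimate, and the decisive structural fact is that $\mathcal{A}_{q,a}$ accumulates only at $0$, so all but finitely many of its points lie in $U$. On a compact subneighborhood a continuous, piecewise-$C^{1}$ function is Lipschitz, whence for all large $n$ one has $|f(\pm aq^{n-1})-f(\pm aq^{n})|\le L\,aq^{n-1}(1-q)=O(q^{n})$, which is precisely the H\"older condition with $\lambda=1$. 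Third, the finitely many remaining pairs of consecutive grid points outside $U$ contribute only finitely many finite differences, each bounded by $Mq^{n}$ once the constant $M$ is taken large enough; enlarging $M$ thus produces a single constant valid for all $n\in\mathbb{N}$. As $\lambda=1>\tfrac12$, Theorem~\ref{MR} applies and $S_q(f)$ converges uniformly to $f$ on $\mathcal{A}_{q,a}$.

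\emph{Main obstacle and the alternative route.} The only genuine care needed is in separating the infinitely many near-origin grid points, where the Lipschitz bound gives the clean $O(q^{n})$ decay, from the finitely many far points, which are harmless but must be absorbed into the H\"older constant; this is the whole reason the hypothesis is stated only on a neighborhood of $0$. If one instead follows Cardoso's original line through Theorem~\ref{Theorem B}, the argument becomes markedly heavier: one writes $C_q(\sqrt q\,w_k t/a)$ and $S_q(q w_k t/a)$ as $q$-derivatives of $S_q$ and $C_q$ (via the differentiation rules for the $q$-trigonometric functions) and integrates by parts using \eqref{qIR}, gaining a factor $1/w_k$ at each step; the boundary terms at $t=\pm a$ vanish for the cosine integral because $w_k$ is a zero of the odd function $S_q$, but they survive for the sine integral, carrying the value $C_q(\sqrt q\,w_k)$. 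Controlling those surviving boundary terms and the reduced integrals then requires the precise asymptotics of the zeros $w_k$ and of the quantities $C_q(\sqrt q\,w_k)$ and $\mu_k=O(q^{-2k^{2}})$. That asymptotic bookkeeping, rather than the integration by parts itself, is the real difficulty on that path, which is exactly why the H\"older reduction to Theorem~\ref{MR} is the cleaner proof.
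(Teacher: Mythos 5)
Your proposal is correct and takes the intended route: the paper states this corollary without proof, deferring to Cardoso's Corollary 4.3, whose argument is exactly your reduction to Theorem~\ref{MR} --- continuity at the origin gives $q$-regularity at zero, piecewise smoothness on a compact subneighborhood gives a Lipschitz bound $|f(\pm aq^{n-1})-f(\pm aq^{n})|\leq L\,aq^{n-1}(1-q)=O(q^{n})$ for all large $n$, and the finitely many remaining differences are absorbed into the H\"older constant, yielding a $q$-linear H\"older condition of order $1>\tfrac{1}{2}$. Your observation that the route through Theorem~\ref{Theorem B} is not viable here (smoothness holds only near the origin, so global integration by parts is unavailable) is also apt and explains why the H\"older reduction is the right proof.
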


\begin{thm}
If $f\in C(\mathcal{A}_{q,a}^*)$ is a $q$-linear H\"{o}lder  odd function of order $\lambda>\frac{1}{2}$ and  satisfying $f(0)=f(a)=0$, then the $q$-Fourier series
\[
S_q(f):=\sum_{k=1}^{\infty}c_k\text{S}_q(\frac{w_k\,x}{a}),
\]where
\[
c_k(f)=c_k=\frac{2}{a\sqrt{q}\mu_k}\int_{0}^{a}f(t)S_q( \frac{w_k t}{a})\,d_qt,
\]
converges uniformly to the function $f$ on the $q$-linear grid $\mathcal{A}_{q,a}$.
\end{thm}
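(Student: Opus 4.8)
The plan is to derive the statement as the odd-function specialization of Theorem~\ref{MR}. First I would verify that the hypotheses of that theorem hold here verbatim: $f\in C(\mathcal{A}_{q,a}^*)$ is $q$-linear H\"older of order $\lambda>\frac12$, so \eqref{qfs} converges uniformly to $f$ on $\mathcal{A}_{q,a}$. It then suffices to show that, when $f$ is odd, the constant term and every cosine term of \eqref{qfs} vanish and each sine coefficient collapses to an integral over $[0,a]$. I would also record at the outset that the prescribed data $f(0)=f(a)=0$ are exactly the values forced on any sine series: each basis function $S_q(w_k x/a)$ is odd and vanishes at $x=0$, and vanishes at $x=a$ because $w_k$ is a zero of $S_q$; hence these boundary conditions are the natural compatibility requirement for $f$ to agree with its sine expansion at the endpoints of the grid.

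The engine of the reduction is the parity behaviour of the Jackson integral on the symmetric grid. Writing $\int_{-a}^{a}g\,d_qt=\int_{0}^{a}g(t)\,d_qt+\int_{0}^{a}g(-t)\,d_qt$, this integral vanishes for odd $g$ and equals $2\int_{0}^{a}g\,d_qt$ for even $g$. Since $C_q$ is even and $f$ is odd, each integrand $f(t)C_q(q^{1/2}w_kt/a)$ is odd, so $a_k=0$ for all $k\ge 1$, and likewise $a_0=\frac1a\int_{-a}^{a}f(t)\,d_qt=0$; this removes the $a_0/2$ term and all cosine terms from \eqref{qfs}. For the surviving coefficients $S_q$ is odd, so $f(t)S_q(qw_kt/a)$ is even and the parity rule turns $b_k$ into a constant multiple of $\int_{0}^{a}f(t)S_q(qw_kt/a)\,d_qt$ with an extra factor $2$.

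Substituting these into \eqref{qfs} leaves a pure sine series whose uniform convergence to $f$ on $\mathcal{A}_{q,a}$ is inherited directly from Theorem~\ref{MR}; the vanishing of the cosine part is immediate from oddness, so this part is routine. The one genuinely delicate step is the final factor-chasing that identifies the reduced coefficient with the stated $c_k=\frac{2}{a\sqrt q\,\mu_k}\int_{0}^{a}f(t)S_q(w_kt/a)\,d_qt$ and reconciles the argument of the basis sine: one must track the powers of $q^{1/2}$ produced in passing from $b_k$ to $c_k$ against the normalization $\mu_k=(1-q)C_q(q^{1/2}w_k)S_q'(w_k)$ and the sine orthogonality relation stated above (with its weight $aq^{-1/2}\mu(w)$), matching $S_q(qw_kx/a)$ with $S_q(w_kx/a)$ and the accompanying $q^{\pm1/2}$ constants. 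I expect this bookkeeping of the $\sqrt q$ factors, rather than any analytic difficulty, to be the main obstacle.
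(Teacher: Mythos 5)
Your first half matches the paper's proof: oddness of $f$ kills $a_0$ and every cosine coefficient, and the parity rule for the Jackson integral on the symmetric grid halves the sine-coefficient integral to $[0,a]$ with a factor $2$. The genuine gap is in the step you flag as ``factor-chasing'' and expect to be bookkeeping of $q^{\pm 1/2}$ constants. It is not: applying Theorem~\ref{MR} directly to $f$ produces an expansion in the basis functions $S_q(qw_kx/a)$ with coefficients built from $\int_0^a f(t)S_q(qw_kt/a)\,d_qt$, whereas the statement to be proved expands $f$ in the functions $S_q(w_kx/a)$ with coefficients built from $\int_0^a f(t)S_q(w_kt/a)\,d_qt$. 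Since $S_q$ is not homogeneous, $S_q(qz)$ is not a scalar multiple of $S_q(z)$: the mismatch sits in the \emph{argument} of the basis functions, not in multiplicative constants, and no tracking of $\sqrt q$ factors can convert one system into the other. Term by term these are genuinely different series (in general $b_k(f)$ is not a constant multiple of $c_k$, because the Jackson sums pair $f(aq^n)$ with $S_q(w_kq^{n+1})$ in one case and with $S_q(w_kq^{n})$ in the other), so your plan stalls exactly at the step you dismissed as routine.

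The paper's fix is a dilation: apply Theorem~\ref{MR} not to $f$ but to $g(x):=f(qx)$, which is still odd, $q$-regular at zero, and $q$-linear H\"older of the same order. The expansion of $g$ reads $f(qx)=\sum_k b_k(g)\,S_q(qw_kx/a)$ on $\mathcal{A}_{q,a}$; writing $y=qx$ turns the basis into $S_q(w_ky/a)$, exactly the stated one, and the substitution $u=qt$ in the Jackson integral turns $b_k(g)$ into the stated $c_k$ --- with the crucial point that the boundary term created by the grid shift (the $n=0$ term, proportional to $f(a)S_q(w_k)$) vanishes precisely because $f(a)=0$ and $S_q(w_k)=0$. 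This is where the hypothesis $f(0)=f(a)=0$ does real work (you noted it only as a ``natural compatibility'' condition); it is also what lets one extend the convergence from $\mathcal{A}_{q,qa}$, the image of the grid under the dilation, back to all of $\mathcal{A}_{q,a}$, since at $y=\pm a$ both $f$ and every term of the sine series vanish. So the missing idea is the dilation $g(x)=f(qx)$ together with the change of variable in the coefficient integral, not $\sqrt q$ bookkeeping.
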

\begin{proof}
The proof follows from \eqref{MR} by considering the function
$g(x):=f(qx)$, $x\in \mathcal{A}$. Since, it is odd, we have $a_k=0$ for $k=0,1,\ldots$, and
\[b_k(f)=\sqrt{q}{\mu_k}\int_{-a}^{a}g(t)S_q( \frac{qw_k t}{a})\,d_qt,\]
making the substitution $u=qt$ and using that $g$ is an odd function, we obtain the required result.
\end{proof}
\begin{defn}
Let $(f_n)_n$ be a sequence of functions in $C(\mathcal{A}_{q,a}^*)$. We say that $f_n$ converges to a function $f$ in $q$-mean
if \[\lim_{n\to\infty} \sqrt{\int_{-a}^{a}|f_n(x)-f(x)|^2\,d_qx}=0.\]
\end{defn}

\begin{prop}\label{prop:FT}
If $g\in C(\mathcal{A}_{q,a}^*)$ is an odd function satisfying  $D_q^kg$ $(k=0,1,2)$ is continuous and piecewise smooth function  in a neighborhood of zero, and satisfying the boundary condition
\begin{align}
g(0)=g(a)=0
\end{align}
then  $g$ can be approximated in the $q-$mean by a linear combination
\[g_n(x)=\sum_{r=}^{n}c_r^{(n)}\text{S}_q(\frac{w_k x}{a})\]
where at the same time $D_q^k g_n$ $($k=1,2$)$ converges in $q$-mean to the $D_q^k g$. Moreover, the coefficients $c_r^{(n)}$ need not depend on $n$ and can be
 written simply as $c_r$.
\end{prop}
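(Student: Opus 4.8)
The plan is to imitate the classical Sturm--Liouville argument, in which the homogeneous boundary data $g(0)=g(a)=0$ let one transfer $q$-derivatives from $g$ onto the trigonometric basis by $q$-integration by parts, so that the termwise $q$-differentiated sine series of $g$ is recognised as the genuine $q$-Fourier series of $D_qg$ and of $D_q^2g$. I begin with the parity and smoothness bookkeeping. Since $g$ is odd, $D_qg$ is even and $D_q^2g$ is odd; by hypothesis each of $g,\,D_qg,\,D_q^2g$ is continuous and piecewise smooth near the origin. The Corollary following Theorem~\ref{Theorem B} then applies to all three, yielding $q$-Fourier series that converge uniformly on $\mathcal{A}_{q,a}$: a pure sine series for the odd functions $g$ and $D_q^2g$ (the cosine coefficients vanishing by oddness) and a pure cosine series for the even function $D_qg$. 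Because $\int_{-a}^{a}d_qx=2a<\infty$, uniform convergence on $\mathcal{A}_{q,a}$ forces convergence in $q$-mean; in particular $g_n\to g$ in $q$-mean, which disposes of the case $k=0$ and, crucially, identifies the $c_r$ as the fixed $q$-Fourier sine coefficients of $g$, independent of the truncation level $n$.

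The heart of the matter is the coefficient identity. Starting from the defining integral $\int_{0}^{a}g(t)\,\text{S}_q(w_rt/a)\,d_qt$ for $c_r$, I would use the differentiation rules $D_{q,z}\text{C}_q(wz)=-\tfrac{w}{1-q}\text{S}_q(\sqrt q\,wz)$ and $D_{q,z}\text{S}_q(wz)=\tfrac{w}{1-q}\text{C}_q(\sqrt q\,wz)$ to write the basis factor as a $q$-derivative and then apply \eqref{qIR} to move one $q$-derivative onto $g$. The boundary term produced at this stage pairs $g$ with a cosine at the endpoints and vanishes outright because $g(0)=g(a)=0$, leaving an integral of $D_qg$ against $\text{C}_q(\sqrt q\,w_rt/a)$, i.e.\ (up to an explicit factor involving $w_r/a$) the cosine coefficient of the even function $D_qg$. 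Repeating the manoeuvre converts this cosine back into a differentiated sine, and a second application of \eqref{qIR} transfers the remaining $q$-derivative onto $D_qg$. This time the boundary term pairs $D_qg$ with $\text{S}_q(w_rt/a)$ at $t=0$ and $t=a$; it vanishes because $\text{S}_q(0)=0$ and because each $w_r$ is, by construction, a zero of $\text{S}_q$. What remains is an integral of $D_q^2g$ against the sine basis, so that the sine coefficient of $D_q^2g$ equals a fixed spectral multiple of $(w_r/a)^2$ times $c_r$.

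With this identity the conclusion follows at once. The termwise second $q$-derivative of $g_n$ is $D_q^2g_n=\sum_r c_r\,D_q^2\text{S}_q(w_rx/a)$, and the differentiation rules give $D_q^2\text{S}_q(w_rx/a)$ as a multiple of $(w_r/a)^2\,\text{S}_q(qw_rx/a)$; by the coefficient identity these are exactly the terms of the $q$-Fourier sine series of $D_q^2g$ appearing in \eqref{qfs}. Hence $D_q^2g_n$ is the $n$-th partial sum of the uniformly convergent series of $D_q^2g$ produced in the first step, so $D_q^2g_n\to D_q^2g$ uniformly, hence in $q$-mean; the single-integration-by-parts computation gives $D_qg_n\to D_qg$ in the same way. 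Since all three series share the one set of coefficients $c_r$, these do not depend on $n$ and may be written simply as $c_r$, as claimed.

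I expect the principal difficulty to lie in the argument-rescalings built into the $q$-trigonometric derivative rules: a single $q$-differentiation replaces the argument $w_rz/a$ by $\sqrt q\,w_rz/a$, so one must track these $\sqrt q$ (and, after two steps, $q$) factors together with the frequency factors $w_r/a$ to verify that the basis functions surviving the two integrations by parts are precisely those of the sine system used to expand $D_q^2g$ in \eqref{qfs}. Equally delicate is controlling the shifted argument $g(qt)$ that the rule \eqref{qIR} introduces in the interior integral --- it is exactly this shift that promotes $\text{S}_q(w_rt/a)$ to $\text{S}_q(qw_rt/a)$ --- and confirming at each stage that the endpoint contributions genuinely cancel, via $g(0)=g(a)=0$ at the first transfer and via $\text{S}_q(0)=0$ together with the zero property $\text{S}_q(w_r)=0$ at the second. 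Once these scalings and boundary cancellations are pinned down, the reduction to the already established uniform convergence of the derivative series is routine.
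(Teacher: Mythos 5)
Your proposal is correct and takes essentially the same route as the paper: both arguments hinge on the coefficient identities linking the $q$-Fourier coefficients of $g$, $D_qg$ and $D_q^2g$ obtained through the $q$-integration by parts rule \eqref{qIR}, with the boundary terms annihilated by $g(0)=g(a)=0$ at one stage and by $S_q(0)=0$, $S_q(w_r)=0$ at the other, followed by the uniform (hence $q$-mean) convergence results of Section 3. The only difference is direction and economy --- the paper expands $D_q^2g$ once in \eqref{feq1} and $q$-integrates termwise twice, identifying the results \eqref{feq2}--\eqref{feq3} as the Fourier series of $D_qg$ and $g$, whereas you expand all three functions separately and match coefficients by differentiating downward --- so when writing this out you should also record, as the paper does, that $a_0(D_qg)=\frac{1}{a}\bigl(g(a)-g(-a)\bigr)=0$, since otherwise the termwise $q$-derivative $D_qg_n$, which has no constant term, could not be the partial sum of the cosine series of $D_qg$.
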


\begin{proof}
We consider the $q$-sine Fourier transform of $D_q^2g$. Hence
\be\label{feq1} D_q^2g(x)=\sum_{k=1}^{\infty}b_k S_q(\frac{q w_k x}{a})=\lim_{n\to\infty}\gamma_n(x),\quad x\in A_{q,a},\ee
where
\[\gamma_n(x)=\sum_{k=1}^{n}b_k S_q({qw_kx}{a}),\quad b_k=\frac{\sqrt{q}}{a\mu_k}\int_{0}^{a}D_q^2g(x)\text{S}_q(\frac{qw_k x}{a})\,d_qx.\]
Consequently,
\[\lim_{n\to\infty}\int_{0}^{a}\left|D_q^2g(x)-\gamma_n(x)\right|^2\,d_qx=0.\]
Hence
\[D_qg(x)-D_qg(0)=\int_{0}^{x}D_q^2g(x)\,d_qx=\frac{a(1-q)}{\sqrt{q}}\sum_{k=1}^{\infty}\frac{b_k}{w_k} \left(-\text{C}_q(\frac{q^{1/2}w_k x}{a})+1\right).\]
Applying the $q$-integration by parts rule~\eqref{qIR} gives
\[a_k(D_q g)=-\frac{a(1-q)}{\sqrt{q}w_k}b_k(D_q^2 g).\]
\[\mbox{I.e.}\qquad  D_qg(x)-D_qg(0)=\sum_{k=1}^{\infty}a_k(D_q g) \left(\text{C}_q(\frac{q^{1/2}w_k x}{a})-1\right).\]
Hence
\be\label{feq2} D_q\,g(x)=\sum_{k=1}^{\infty}a_k(D_q g)\text{C}_q(\frac{q^{1/2}w_k x}{a}), \;x\in A_{q,a}^*\ee
Note that $a_0(D_qg)=0$ because $g(0)=g(a)=0$. Again by $q$-integrating the two sides of \eqref{feq2}, we obtain
\be \label{feq3}g(x)=\sum_{k=1}^{\infty}a_k(D_qg)\frac{a(1-q)}{w_k}S_q(\frac{w_k x}{a}),\quad x\in A_{q,a}^*\ee
One can verify that \[b_k(g)=\frac{a(1-q)}{w_k}a_k(D_qg).\] Hence the right hand sides of \eqref{feq2} and \eqref{feq3} are the $q$-Fourier series of
 $D_qg$ and $g$, respectively. Hence the convergence is uniform in $C(\mathcal{A}_{q,a}^*)$ and $L_{q^2}(\mathcal{A}_{q,a}^*)$ norms.

\end{proof}

\section{{$q$-Fractional Variational Problems}}\label{Sec:CV1}

The calculus of variations is as old as the calculus itself, and has many applications in physics and mechanics.  As the calculus has two forms, the continuous  calculus with the power concept of limits, and the discrete calculus which also called the calculus of finite difference, the calculus of variations has also both of
 the discrete and continuous forms. For a brief history of the continuous calculus of variations, see~\cite{Brunt}. The discrete calculus of variations starts in 1948 by Fort
 in his book~\cite{Fort} where he devoted a chapter to the finite analogue of the calculus of variations, and he introduced a necessary condition analogue to the Euler
 equation and also a sufficient condition.
 The paper of Cadzow~\cite[1969]{Cadzow} was the first paper published in this field, then
  Logan developed the theory in his Ph.D. thesis~\cite[1970]{Loganthesis} and in a series of papers~\cite{Logan72,Logan73-1,Logan73-2,Logan74}.
   See also the Ph.D. thesis of Harmsen~\cite{DCV} for a brief history for  the discrete variational calculus, and for the developments in the theory,
   see~\cite{Harmsen95,Harmsen96-1,Harmsen96-2,CC,Cheung,CDZ2013,Cristina}. In 2004, a $q$-version of the discrete variational calculus is firstly introduced by
   Bangerezako in~\cite{Bangerezako1} for a functions defined in the form
   \[   J(y(x))=\int_{q^{\alpha}}^{q^\beta}xF(x,y(x),D_qy(x),\ldots, D_q^ky(x))\,d_qx,
   \]
   where $q^{\alpha}$ and $q^{\beta}$ are in the uniform lattice $A_{q,a}^*$ for some $a>0$ such that $\alpha>\beta$, provided that the boundary conditions
   \[D_q^{j}y(q^{\alpha})=D_q^{j}y(q^{\beta+1})=c_j\quad(j=0,1,\ldots,k-1).\]
   He introduced a $q$-analogue of the Euler-Lagrange equation and applied it to solve certain isoperimetric problem. Then, in 2005,
    Bangerezako~\cite{Bangerezako2} introduced certain $q$-variational problems on a nonuniform lattice. In 2010, Malinowska, and  Torres introduced the Hahn quantum
    variational calculus. They derived the Euler-Lagrange equation associated with variational problem
    \[
    J(y)=\int_{a}^{b}F\left(t,y(qt+w),D_{q,w}y(t)\right)\,d_{q,w}t,
    \]
    under the boundary condition $y(a)=\alpha$, $y(b)=\beta$ where $\alpha$ and $\beta $ are constants and $D_{q,w}$ is the Hahn difference operator defined by
    \[
    D_{q,w}f(t)=\left\{\begin{array}{cc}\dfrac{f(qt+w)-f(t)}{(qt+w)-t},& \mbox{if}\; t\neq \frac{w}{1-q},\\
    f'(0),&\mbox{ if}\; t=\frac{w}{1-q}.\end{array}\right.
    \]
   Problems of the classical calculus of variations with integrand depending on fractional derivatives  instead of ordinary derivatives are first introduced by
    Agrawal~\cite{Agarwal2002} in 2002. Then, he extends his result for variational problems include Riez fractional derivatives in~\cite{Agrawal2007}. Numerous works     have been dedicated  to the subject since Agarwal work. See for example~
    \cite{Hamilton1,Hamiliton2,KOM,Almedia-Malinowska-Torres,Almedia-Torres1,Almedia-Torres2,Almedia-Torres3,El-Nabulsi}.

    In this Section, we shall derive
    Euler--Lagrange equation for a $q$-variational problem when the integrand include  a left-sided $q$-Caputo fractional derivative and we also solve a related isoperimetric problem. From now on,  we fix $\alpha\in(0,1)$, and define  a subspace  of $C(A_{q,a}^*)$ by
 \[{}_0E_{a}^{\alpha}=\set{y\in \mathcal{A}C(A_{q,a}^*)\;:\; \CZ^{\alpha}y\in\,C(A_{q,a}^*)  },\]
and the space of variations $\V$ for the  Caputo $q$-derivative  by
\[\V=\set{h\in {}_0E_{a}^{\alpha}\;:\; h(0)=h(a)=0}.\]
For a function $f(x_1,x_2,\ldots,x_n)$ ($n\in\mathbb{N}$) by $\partial_i f$ we mean the partial derivative of  $f$ with respect to the $i$th variable, $i=1,2,\ldots,n$.
In the sequel, we shall need the following definition from~\cite{HVP}.
\begin{defn} Let $A\subseteq \mathbb{R}$ and $g:A\times ]-\overline{\theta},\overline{\theta}[\to\mathbb{R}$. We say that $g(t,\cdot)$ is continuous at  $\theta_0$
uniformly in $t$, if and only if $\forall \epsilon>0\; \exists \delta>0$ such that
\[|\theta-\theta_0|<\delta\To |g(t,\theta)-g(t,\theta_0)|<\epsilon\;\mbox{ for all}\; t\in A.\] Furthermore, we say that $g(t,\cdot)$ is differentiable
at $\theta_0$ uniformly in $t$
if and only if $\forall \epsilon>0$ $\exists \delta>0$ such that
\[|\theta-\theta_0|<\delta\To \left|\frac{g(t,\theta)-g(t,\theta_0)}{\theta-\theta_0}-\delta_2g(t,\theta_0)\right|<\epsilon\;\mbox{for all}\; t\in A.\]

\end{defn}
We now present first order necessary conditions of optimality for functionals, defined on $\E$, of the type
\be\label{J:fn}
J(y)=\int_{0}^{a}F(x,y,\CZ^{\alpha}y)\,d_qx,\quad 0<\alpha<1,
\ee
where  $F:A_{q,a}^*\times \mathbb{R}\times \mathbb{R}\to\mathbb{R}$ is a given function.
We assume that
\begin{enumerate}
\item The functions
$(u,v)\to F(t,u,v)$ and $(u,v)\to  \partial_i F(t,u,v)$ $(i=2,3)$ are continuous functions uniformly on $A_{q,a}$.
\item $F\left(\cdot,y(\cdot),\CZ^{\alpha}(\cdot)\right)$, $\delta_i F\left(\cdot,y(\cdot),\CZ^{\alpha}(\cdot)\right)$ $(i=2,3)$ are $q$-regular at zero.
\item  $\delta_3 F$ has a right Riemann-–Liouville fractional $q$-derivative of order
$\alpha$ which is $q$-regular at zero.
\end{enumerate}
\begin{defn}
Let $y_0\in \E$. Then $J$ has a local maximum at $y_0$ if
\[ \exists \delta>0\;\mbox{ such that}\;J(y)\leq J(y_0)\;\mbox{ for all}\; y\in \E\;\mbox{ with}\; \norm{y-y_0}<\delta,\] and $J$ has a local minimum at $y_0$ if
\[ \exists \delta>0\;\mbox{ such that}\; J(y)\geq J(y_0)\;\mbox{ for all}\;y\in S\;\mbox{ with}\; \norm{y-y_0}<\delta.\]
$J$ is said the have a local extremum at $y_0$ if it has either a local maximum or local minimum.
\end{defn}

\begin{lem}\label{lem4}
Let  $\gamma\in L_{q}^2(A_{q,a}^*)$.
 \begin{enumerate}
 \item[(i)] If \be\label{INT} \int_{0}^{a}\gamma (x)h(x)\,d_qx=0\ee
for every $h\in L_q^2(A_{q,a})$ then
\be \label{C2}\gamma(x)\equiv 0\;\mbox{on}\;A_{q,a}.\ee
\item[(ii)]If \eqref{INT} holds only for all functions $h\in L_q^2(A_{q,a}^*)$ satisfying  $h(a)=0$
then
\be \label{C2-1}\gamma(x)\equiv 0\;\mbox{on}\;A_{q,qa}.\ee
Moreover, in the two cases,  if $\gamma$ is $q$-regular at zero, then  $\gamma(0)=0$.
\end{enumerate}
\end{lem}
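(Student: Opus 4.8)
The plan is to exploit the fact that the Jackson $q$-integral over $[0,a]$ is nothing but a weighted series over the discrete grid $A_{q,a}$: for any function $f$,
\[
\int_0^a f(x)\,d_qx=(1-q)a\sum_{n=0}^{\infty}q^n f(aq^n).
\]
This turns the fundamental lemma of the calculus of variations into a statement about a series with strictly positive weights $(1-q)aq^n$, so that the usual bump-function construction is replaced by the far simpler device of point-mass indicators that isolate a single grid point.

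For part (i), fix $m\in\mathbb{N}_0$ and take $h=h_m$ to be the indicator of the single point $aq^m$, that is $h_m(aq^n)=1$ if $n=m$ and $0$ otherwise. Being supported at one point, $h_m$ satisfies $\norm{h_m}_2^2=(1-q)aq^m<\infty$, so $h_m\in L_q^2(A_{q,a})$ and is an admissible test function. Substituting $h_m$ into \eqref{INT} and using the series form of the integral, every term with $n\neq m$ drops out, leaving $(1-q)aq^m\,\gamma(aq^m)=0$, whence $\gamma(aq^m)=0$. Since $m$ is arbitrary, this is exactly \eqref{C2}.

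For part (ii) the argument is identical except that the grid point $x=a$ (the case $n=0$) is no longer reachable, because admissible $h$ must satisfy $h(a)=0$ and so $h_0$ is excluded; note moreover that the value of any such $h$ at $a$ never contributes to the integral, which is precisely why nothing can be said about $\gamma(a)$. For every $m\geq 1$, however, the indicator $h_m$ still satisfies $h_m(a)=0$ and remains admissible, so the same computation gives $\gamma(aq^m)=0$ for all $m\geq 1$, i.e. \eqref{C2-1}. Finally, if $\gamma$ is $q$-regular at zero then $\lim_{n\to\infty}\gamma(aq^n)=\gamma(0)$ by definition; since $\gamma(aq^n)=0$ for all $n$ in case (i) and for all $n\geq 1$ in case (ii), the limit is $0$ and hence $\gamma(0)=0$. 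The only points requiring care, and the nearest thing to an obstacle, are verifying admissibility ($h_m\in L_q^2$) and correctly tracking which endpoint is excluded in part (ii); there is no analytic difficulty of the classical continuous type, since the discreteness of the grid makes the isolating test functions legitimate members of $L_q^2$.
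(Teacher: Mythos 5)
Your proposal is correct and follows essentially the same route as the paper: both isolate a single grid point $aq^m$ with an indicator test function, use the series form of the Jackson $q$-integral to conclude $\gamma(aq^m)=0$, and pass to the limit for the $q$-regularity claim. You additionally spell out the admissibility check and part (ii), which the paper omits as ``similar,'' but the underlying argument is identical.
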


\begin{proof}
To prove (i), we fix $k\in\mathbb{N}_0$ and set  $h_k(x)=\left\{\begin{array}{cc}1,& x=aq^k\\ 0,&\mbox{otherwise}\end{array}\right.$. Then $h_k\in L_q^2(0,a)$. Substituting in \eqref{INT} yields
\[aq^k(1-q)\gamma(aq^k)=0.\quad \forall k\in\mathbb{N}_0. \]
Thus, $\gamma(aq^k)=0$ for all $k\in\mathbb{N}_0$. Clearly if $\gamma$ is $q$-regular at zero, then
\[\gamma(0):=\lim_{k\to\infty}\gamma(aq^k)=0.\]
The proof of (ii) is similar and is omitted.
\end{proof}

\begin{lem}\label{lem0}
If $\alpha\in C(A_{q,a}^*)$ and
\[
\int_{0}^{a}\alpha(x)D_qh(x)\,d_qx=0,
\]
for any function $h$ satisfying
\begin{enumerate}
\item $h$ and $D_q h$ are $q$-regular at zero,
\item $h(0)=h(a)=0$,
\end{enumerate}
then $\alpha(x)=c$ for all $x\in A_{q,a}^*$ where $c$ is a constant.
\end{lem}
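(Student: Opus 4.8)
The statement is the $q$-analogue of the classical du Bois--Reymond lemma, and the plan is to imitate the continuous argument by constructing an explicit admissible test function built out of $\alpha$ itself. First I would isolate the only constant that can possibly work, namely the $q$-average
\[
c:=\frac{1}{a}\int_{0}^{a}\alpha(t)\,d_qt,
\]
the normalization being forced by the identity $\int_{0}^{a}1\,d_qt=a$, so that $\int_{0}^{a}(\alpha(t)-c)\,d_qt=\int_0^a\alpha\,d_qt-ca=0$.

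Next I would set $h(x):=\int_{0}^{x}(\alpha(t)-c)\,d_qt$ for $x\in A_{q,a}^{*}$ and verify that $h$ is an admissible variation, i.e. that it satisfies hypotheses (1)--(2). By the fundamental theorem of $q$-calculus $D_qh(x)=\alpha(x)-c$, which is $q$-regular at zero because $\alpha\in C(A_{q,a}^{*})$ is and the constant $c$ trivially is; the function $h$ itself is $q$-regular at zero since the integrand $\alpha-c$ is bounded, whence $h(aq^n)=\int_{0}^{aq^n}(\alpha-c)\,d_qt\to 0=h(0)$. Finally $h(0)=0$ automatically, and $h(a)=\int_{0}^{a}(\alpha-c)\,d_qt=0$ precisely by the defining property of $c$.

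Substituting this particular $h$ into the hypothesis then gives $\int_{0}^{a}\alpha(x)\bigl(\alpha(x)-c\bigr)\,d_qx=0$. Since also $\int_{0}^{a}c\bigl(\alpha(x)-c\bigr)\,d_qx=c\int_{0}^{a}(\alpha-c)\,d_qx=0$, again by the choice of $c$, subtracting the two relations yields $\int_{0}^{a}\bigl(\alpha(x)-c\bigr)^{2}\,d_qx=0$. Because the $q$-integral of a nonnegative function equals $a(1-q)\sum_{n\ge 0}q^{n}\bigl(\alpha(aq^{n})-c\bigr)^{2}$, every term in this series must vanish, so $\alpha(aq^{n})=c$ for all $n\in\mathbb{N}_0$; that is, $\alpha\equiv c$ on $A_{q,a}$. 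The $q$-regularity of $\alpha$ at zero then supplies $\alpha(0)=\lim_{n\to\infty}\alpha(aq^{n})=c$, so $\alpha(x)=c$ for all $x\in A_{q,a}^{*}$.

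The only genuinely delicate step, and the one I would write out most carefully, is the verification that the constructed $h$ is a legitimate test function: one must confirm simultaneously the $q$-regularity at zero of both $h$ and $D_qh$, and that the single free parameter $c$ can be pinned down so as to enforce $h(a)=0$ together with $D_qh=\alpha-c$. Once admissibility is secured, the remainder is a direct transcription of the classical du Bois--Reymond argument, relying only on the elementary fact that a nonnegative $q$-integrand whose $q$-integral vanishes must vanish termwise on the lattice $A_{q,a}$.
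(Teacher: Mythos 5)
Your proposal is correct and follows essentially the same route as the paper: the same choice $c=\frac{1}{a}\int_0^a\alpha(x)\,d_qx$, the same test function $h(x)=\int_0^x[\alpha(\xi)-c]\,d_q\xi$, and the same conclusion from $\int_0^a[\alpha(x)-c]^2\,d_qx=0$ together with $q$-regularity at zero. The only differences are expository: you make explicit the termwise-vanishing argument for the nonnegative $q$-integral and the admissibility checks, which the paper states more tersely.
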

\begin{proof}
Let $c$ be the constant defined by the relation  $c=\frac{1}{a}\int_{0}^{a}\alpha(x)\,d_qx$.
Let \[h(x):=\int_{0}^{x}\left[\alpha(\xi)-c\right]\,d_q\xi,\quad x\in A_{q,a}^*.\]
So, $h$ and $D_q h$ are $q$-regular at zero functions such that $h(0)=h(a)=0$.
Since
\[\int_{0}^{a}\left[\alpha(x)-c\right]D_qh(x)\,d_qx=\int_{0}^{a}\alpha(x)D_qh(x)\,d_qx+\left[\alpha(x)-c\right] h(x)\Big|_{x=0}^{a}=0,\]
on the other hand,
\[\int_{0}^{a}\alpha(x)D_qh(x)\,d_qx=\int_{0}^{a}\left[\alpha(x)-c\right]^2\,d_qx=0.\]
Therefore, $\alpha(x)=c$ for all $x\in A_{q,a}$.
But $\alpha$ is $q$-regular at zero, hence $\alpha(0)=0$.
This yields the required result.

\end{proof}

\begin{thm}
Let $y\in \V$ be  a local extremum of $J$. Then, $y$ satisfies the Euler-Lagrange equation
\be \label{ELE0}\partial_2 F(x) +D_{q,a^-}^{\alpha}\,\partial_3 F(x)=0,\quad \forall\,x\in A_{q,qa}^*.\ee
\end{thm}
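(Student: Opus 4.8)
The plan is to run the classical three-step argument of the calculus of variations, transcribed to the Jackson $q$-integral. First I would fix an arbitrary variation $h\in\V$ and a real parameter $\eps$. Since $\E={}_0E_{a}^{\alpha}$ is a linear space and $h\in\V\subseteq\E$, the perturbed function $y+\eps h$ again lies in $\E$, so $\phi(\eps):=J(y+\eps h)$ is well defined for $\eps$ near $0$. Because $y$ is a local extremum of $J$ and $\norm{(y+\eps h)-y}=\abs{\eps}\,\norm{h}\to 0$ as $\eps\to 0$, the scalar function $\phi$ has a local extremum at $\eps=0$; hence the necessary condition $\phi'(0)=0$ will hold, once $\phi$ is shown to be differentiable there.

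Second I would compute $\phi'(0)$. Exploiting the linearity of the Caputo operator, $\CZ^{\alpha}(y+\eps h)=\CZ^{\alpha}y+\eps\,\CZ^{\alpha}h$, so that
\[
\phi(\eps)=\int_{0}^{a} F\!\left(x,\,y(x)+\eps h(x),\,\CZ^{\alpha}y(x)+\eps\,\CZ^{\alpha}h(x)\right)d_qx .
\]
Writing this $q$-integral as the series $a(1-q)\sum_{n\ge 0}q^{n}F(aq^{n},\cdots)$, I would differentiate term by term and interchange differentiation with the summation. This is exactly where the standing hypotheses before the statement are needed: the assumption that $(u,v)\mapsto\partial_iF(t,u,v)$ $(i=2,3)$ are continuous uniformly in $t$, in the sense of the definition borrowed from \cite{HVP}, is what forces the differentiated series to converge uniformly and so legitimizes the interchange. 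The outcome is
\[
0=\phi'(0)=\int_{0}^{a}\Big[\partial_2F(x)\,h(x)+\partial_3F(x)\,\CZ^{\alpha}h(x)\Big]\,d_qx ,
\]
where $\partial_iF(x)$ abbreviates $\partial_iF(x,y(x),\CZ^{\alpha}y(x))$. I expect this justification of differentiation under the $q$-integral sign to be the main obstacle; the remaining steps are essentially formal.

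Third I would transfer the fractional derivative off $h$. Applying the Caputo integration-by-parts rule \eqref{CP3} with $f=h$ and $g=\partial_3F$ — valid since $h\in\mathcal{A}C_q(A_{q,a}^*)$ and, by hypotheses (2)--(3), $\partial_3F$ is $q$-regular at zero, hence bounded, with $D_{q,a^-}^{\alpha}\partial_3F\in L_q^1(A_{q,a}^*)$ — gives
\[
\int_{0}^{a}\partial_3F(x)\,\CZ^{\alpha}h(x)\,d_qx=\left(I_{q,a^-}^{1-\alpha}\partial_3F\right)\!\big(\tfrac{x}{q}\big)\,h(x)\Big|_{x=0}^{a}+\int_{0}^{a}h(x)\,D_{q,a^-}^{\alpha}\partial_3F(x)\,d_qx .
\]
The boundary term vanishes identically because every $h\in\V$ satisfies $h(0)=h(a)=0$ while the factor $\left(I_{q,a^-}^{1-\alpha}\partial_3F\right)(x/q)$ stays bounded at the endpoints. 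Substituting back yields
\[
\int_{0}^{a}\Big[\partial_2F(x)+D_{q,a^-}^{\alpha}\partial_3F(x)\Big]\,h(x)\,d_qx=0\quad\text{for every }h\in\V .
\]

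Finally I would invoke the fundamental lemma. Setting $\gamma:=\partial_2F+D_{q,a^-}^{\alpha}\partial_3F$, the last identity has exactly the form treated in Lemma \ref{lem4}(ii), provided the class $\V$ is rich enough to supply the needed test functions. This I would verify directly: the single-point indicators $h_k$ (equal to $1$ at $aq^{k}$ and $0$ elsewhere) are finitely supported, hence belong to $\mathcal{A}C_q(A_{q,a}^*)$ with $\CZ^{\alpha}h_k\in C(A_{q,a}^*)$, and for $k\ge 1$ they satisfy $h_k(0)=h_k(a)=0$, so $h_k\in\V$. Inserting $h=h_k$ collapses the $q$-integral to $a q^{k}(1-q)\,\gamma(aq^{k})=0$, whence $\gamma(aq^{k})=0$ for all $k\ge 1$, i.e. $\gamma\equiv 0$ on $A_{q,qa}$; and since $\gamma$ is $q$-regular at zero by hypothesis (2), also $\gamma(0)=\lim_{k\to\infty}\gamma(aq^{k})=0$. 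Therefore $\gamma\equiv 0$ on $A_{q,qa}^*$, which is precisely the Euler--Lagrange equation \eqref{ELE0}.
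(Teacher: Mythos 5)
Your proposal is correct and follows essentially the same route as the paper's proof: perturb $y$ by $\eps h$, set the first variation $\phi'(0)$ to zero, transfer the Caputo derivative off $h$ via the fractional $q$-integration-by-parts rule \eqref{CP3} (the paper cites \eqref{CP}, but the form it actually applies is \eqref{CP3}), use $h(0)=h(a)=0$ to kill the boundary term, and conclude with Lemma~\ref{lem4}. The only differences are matters of rigor rather than of method: you justify the differentiation under the $q$-integral sign and you verify explicitly that the single-point indicator functions lie in $\V$ so that the fundamental lemma applies, details the paper leaves implicit.
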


\begin{proof}
 Let $y$ be a local extremum of $J$ and let  $\eta$ be arbitrary but fixed  variation function  of $y$.
 Define \[\Phi(\epsilon)=J(y+\epsilon\eta).\]
Since $y$ is a local extremum for $J$, and $J(y)=\Phi(0)$, it follows that $0$ is a local extremum for $\phi$. Hence $\phi'(0)=0$.
But \[0=\phi'(0)=\lim_{\epsilon\to 0}\frac{d}{d\epsilon}\phi(y+\epsilon \eta)=\int_{0}^{a}\left(\partial_2 F \eta+\partial_3 F\,\CZ^{\alpha}\eta\right)\,d_qx.\]
 Using \eqref{CP}, we obtain
\[0=\int_{0}^{a}\left(\partial_2 F +\CM^{\alpha}\partial_3 F\right)\eta\,d_qx+I_{q,a^-}^{1-\alpha}\partial_3 F(x) \eta(x)\Big|_{x=0}^{a}.\]
Since $\eta$ is a variation function, then $\eta(0)=\eta(a)=0$ and
we have
\[\int_{0}^{a}\left(\partial_2 F +D_{q,a^-}^{\alpha}\,\partial_3 F\right)\eta\,d_qx=0\] for any $\eta\in S$. Consequently, From Lemma~\ref{lem4}, we obtain
\eqref{ELE0} and completes the proof.
 \end{proof}

\subsection{A $q$-Fractional Isoperimetric Problem}\label{Sec:CV2}
In the following,  we shall  solve the $q$-fractional isoperimetric  problem:
Given a functional $J$ as in \eqref{J:fn}, which function minimize (or maximize) $J$, when subject to the boundary conditions
\be\label{EQ:2}
y(0)=y_0,\quad y(a)=y_a
\ee
and the $q$-integral constraint
\be\label{EQ:3}
I(y)=\int_{0}^{a}G(x,y, \CZ^\alpha y)\,d_qx=l,
\ee
where $l$ is a fixed real number. Here, similarly as before,
\begin{enumerate}
\item The functions
$(u,v)\to G(t,u,v)$ and $(u,v)\to  \partial_i G(t,u,v)$ $(i=2,3)$ are continuous functions uniformly on $A_{q,a}$.
\item $G\left(\cdot,y(\cdot),\CZ^{\alpha}(\cdot)\right)$, $\delta_i G\left(\cdot,y(\cdot),\CZ^{\alpha}(\cdot)\right)$ $(i=2,3)$ are $q$-regular at zero.
\item  $\delta_3 G$ has a right Riemann-–Liouville fractional $q$-derivative of order
$\alpha$ which is $q$-regular at zero.
\end{enumerate}
A function $y\in E$ that satisfies \eqref{EQ:2} and \eqref{EQ:3} is called admissible.

\begin{defn}
An admissible function $y$ is an extremal for $I$ in \eqref{EQ:3}
 if it satisfies the equation
 \be \label{ELE}\partial_2 G(x) +D_{q,a^-}^{\alpha}\,\partial_3 G(x)=0,\quad \forall\,x\in A_{q,qa}^*.\ee
\end{defn}
\begin{thm}\label{thm:qIP}
Let $y$ be a local extremum  for $J$ given by \eqref{J:fn}, subject to the conditions \eqref{EQ:2} and \eqref{EQ:3}. If $y$ is not an extremal of the function $I$, then there exits a constant $\lambda$ such that $y$ satisfies
\be \label{ELE2}
\partial_2 H(x) +D_{q,a^-}^{\alpha}\,\partial_3 H(x)=0,\quad \forall\,x\in A_{q,qa}^*,\ee
where $H:=F-\lambda\,G$.
\end{thm}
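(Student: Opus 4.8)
The plan is to follow the classical Lagrange-multiplier argument adapted to the fractional $q$-setting, using a two-parameter variation together with the implicit function theorem. First I would take two arbitrary variation functions $\eta_1,\eta_2\in\V$ and form the perturbed curve $\widetilde{y}=y+\epsilon_1\eta_1+\epsilon_2\eta_2$; since $\eta_i(0)=\eta_i(a)=0$, every such $\widetilde{y}$ satisfies the boundary conditions \eqref{EQ:2}. I would then introduce the two real-variable functions
\[
\widehat{I}(\epsilon_1,\epsilon_2)=I(\widetilde{y})-l,\qquad \widehat{J}(\epsilon_1,\epsilon_2)=J(\widetilde{y}),
\]
noting that $\widehat{I}(0,0)=0$ by admissibility. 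Differentiating under the Jackson integral (legitimate by the uniform continuity and differentiability hypotheses imposed on $F$ and $G$) and applying the integration-by-parts formula \eqref{CP}, the boundary terms drop out because $\eta_i(0)=\eta_i(a)=0$, leaving
\[
\frac{\partial\widehat{I}}{\partial\epsilon_2}\Big|_{(0,0)}=\int_0^a\Big(\partial_2G+D_{q,a^-}^{\alpha}\,\partial_3G\Big)\eta_2\,d_qx,
\]
together with the analogous representations of the $\epsilon_1$-derivatives and of the derivatives of $\widehat{J}$.

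The second step uses the hypothesis that $y$ is not an extremal of $I$: the function $\partial_2G+D_{q,a^-}^{\alpha}\partial_3G$ is then not identically zero on $A_{q,qa}^*$, so by the contrapositive of Lemma~\ref{lem4} I can fix a single $\eta_2\in\V$ for which $\partial\widehat{I}/\partial\epsilon_2|_{(0,0)}\neq0$. With $\eta_2$ frozen, the implicit function theorem applied to $\widehat{I}(\epsilon_1,\epsilon_2)=0$ near the origin produces a $C^1$ map $\epsilon_2=\epsilon_2(\epsilon_1)$ with $\epsilon_2(0)=0$ that keeps the constraint \eqref{EQ:3} satisfied along a one-parameter admissible family.

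Since $y$ is a constrained local extremum, $\epsilon_1=0$ is an unconstrained local extremum of $g(\epsilon_1):=\widehat{J}(\epsilon_1,\epsilon_2(\epsilon_1))$, so $g'(0)=0$. Computing $g'(0)$ by the chain rule and substituting the implicit-differentiation identity $\epsilon_2'(0)=-\big(\partial\widehat{I}/\partial\epsilon_1\big)/\big(\partial\widehat{I}/\partial\epsilon_2\big)$ evaluated at the origin yields the relation $\partial\widehat{J}/\partial\epsilon_1|_0=\lambda\,\partial\widehat{I}/\partial\epsilon_1|_0$, where I set
\[
\lambda:=\frac{\partial\widehat{J}/\partial\epsilon_2|_0}{\partial\widehat{I}/\partial\epsilon_2|_0}.
\]
Writing both $\epsilon_1$-derivatives through their integral representations and collecting terms gives
\[
\int_0^a\Big[\partial_2(F-\lambda G)+D_{q,a^-}^{\alpha}\,\partial_3(F-\lambda G)\Big]\eta_1\,d_qx=0
\]
for the arbitrary $\eta_1\in\V$. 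Because $\eta_1(a)=0$, Lemma~\ref{lem4}(ii) forces the bracketed quantity to vanish on $A_{q,qa}^*$, which is exactly \eqref{ELE2} with $H=F-\lambda G$.

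I expect the main obstacle to be analytic rather than algebraic: one must verify that $\widehat{I}$ and $\widehat{J}$ are genuinely $C^1$ in $(\epsilon_1,\epsilon_2)$, so that both the differentiation under the Jackson integral and the implicit function theorem are justified. This is precisely where the uniform-in-$t$ continuity and differentiability assumptions on $F$ and $G$ (in the sense of the definition borrowed from \cite{HVP}) are used, together with the fact that $\CZ^{\alpha}$ acts boundedly on the relevant function spaces, so that $\CZ^{\alpha}\widetilde{y}$ depends continuously on $(\epsilon_1,\epsilon_2)$ and the composite integrand inherits the required regularity.
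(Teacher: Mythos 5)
Your proposal is correct and follows essentially the same route as the paper: a two-parameter variation $y+\epsilon_1\eta_1+\epsilon_2\eta_2$, the implicit function theorem applied to the constraint functional (using that $y$ is not an extremal of $I$ to produce an $\eta_2$ with nonzero $\epsilon_2$-derivative), and then Lemma~\ref{lem4} to pass from the vanishing $q$-integral against arbitrary $\eta_1$ to the pointwise equation \eqref{ELE2} for $H=F-\lambda G$. The only difference is presentational: where the paper invokes the Lagrange multiplier rule abstractly (citing Lang), you derive the multiplier $\lambda$ explicitly via the chain rule along $\epsilon_1\mapsto(\epsilon_1,\epsilon_2(\epsilon_1))$ and implicit differentiation, which amounts to inlining the proof of that rule.
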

\begin{proof}
Let $\eta_1$, $\eta_2\in \V$ be two functions, and let $\epsilon_1$ and $\epsilon_2$ be two real numbers, and consider the new function of two parameters.
\be\label{EQ:4}
\breve{y}=y+\epsilon_1\eta_1+\epsilon_2\eta_2.
\ee
The reason why we consider two parameters is because we can choose one of them as a function of the other in order to $\breve{y}$ satisfies  the $q$-integral constraint \eqref{EQ:3}. Let
\[
\breve{I}(\epsilon_1,\epsilon_2)=\int_{0}^{a}G(x, \breve{y},\CZ^{\alpha}\breve{y})\,d_qx-l.
\]
It follows by the $q$-integration by parts rule~\eqref{qIR} that
\[
\frac{\partial \breve{I}}{\partial \epsilon_2}\Big|_{(0,0)}=\int_{0}^{a}\left(\partial_2 G(x) +D_{q,a^-}^{\alpha}\,\partial_3 G(x)\right)\eta_2\,d_qx.
\]
Since $y$ is not an extremal of $I$, then there exists a function $\eta_2$ satisfying the condition
$\frac{\partial \breve{I}}{\partial \epsilon_2}|_{(0,0)}\neq 0$.  Hence, from the fact that $\breve{I}(0,0)=0$ and the {\it Implicit Function Theorem}, there
 exists a $C^1$ function $\epsilon_2(\cdot)$, defined in some neighborhood of zero, such that
\[ \breve{I}(\epsilon_1, \epsilon_2(\epsilon_1))=0.\]
Therefore, there exists a family of variations of type \eqref{EQ:4} that satisfy the $q$-integral constraint. To prove the theorem, we define a new function $\breve{J}(\epsilon_1,\epsilon_2)=J(\breve{y})$. Since $(0,0)$ is a local extremum of $\breve{J}$
subject to the constraint $\breve{I}(0,0)=0$, and
$\nabla \breve{I}(0,0)\neq (0,0)$, by the Lagrange Multiplier rule, see~\cite{Lang}, there exists a constant $\lambda$ for which the following holds:
\[
\nabla \breve{J}(0,0)\neq (0,0)-\lambda  \breve{I}(0,0)= (0,0).
\]
Simple calculations shows that
\[
\frac{\partial \breve{J}}{\partial \epsilon_1}\Big|_{(0,0)}=\int_{0}^{a}\left(\partial_2 F(x) +D_{q,a^-}^{\alpha}\,\partial_3 F(x)\right)\eta_1\,d_qx,
\]
and
\[
\frac{\partial \breve{I}}{\partial \epsilon_1}\Big|_{(0,0)}=\int_{0}^{a}\left(\partial_2 G(x) +D_{q,a^-}^{\alpha}\,\partial_3 G(x)\right)\eta_1\,d_qx.
\]
Consequently,
\[\int_{0}^{a}\left[\partial_2 F(x) +D_{q,a^-}{\alpha}\,\partial_3 F(x)-\lambda \left(\partial_2 G(x) +D_{q,a^-}{\alpha}\,\partial_3 G(x)\right)\right]\eta_1\,d_qx.\]
Since $\eta_1$ is arbitrary, then from Lemma~\ref{lem4}, we obtain
\[\partial_2 F(x) +D_{q,a^-}^{\alpha}\,\partial_3 F(x)-\lambda \left(\partial_2 G(x) +D_{q,a^-}^{\alpha}\,\partial_3 G(x)\right)=0,\]
for all $x\in A_{q,qa}^{*}$.
This is equivalent to \eqref{ELE2} and completes the proof.
\end{proof}
The functions
\begin{eqnarray*}e_{\alpha,\beta}(z;q)&:=&\sum_{n=0}^{\infty}\dfrac{z^n}{\Gamma_q(\alpha\,n+1)};\;|z(1-q)^{\alpha}|<1,\\ E_{\alpha,\beta}(z;q)&:=&\sum_{n=0}^{\infty}q^{\frac{\alpha}{2}n(n-1)}\dfrac{z^n}{\Gamma_q(\alpha n+1)};\;z\in\mathbb{C}
 \end{eqnarray*}
are $q$-analogues of the Mittag--Leffler function
\[E_{\alpha,\beta}(z)=\sum_{n=0}^{\infty}\dfrac{z^n}{\Gamma_q(\alpha\,n+1)},\; z\in\mathbb{C},\]
see~\cite{AMbook}.
We have
\be\label{LSD} \CZ^{\alpha}e_{\alpha,1}(z;q):=e_{\alpha,1}(z;q);\quad \CZ^{\alpha}E_{\alpha,1}(z;q)=E_{\alpha,1}(qz;q).\ee

\begin{exa}
Consider the fractional $q$-isoperimetric problem:
\be\label{IP:1}\begin{split}
J(y)&=\int_{0}^{a}\left(\CZ^{\alpha}y(x)\right)^2\,d_qx,\\
I(y)&=\int_{0}^{a}e_{\alpha,1}(x^{\alpha};q)\CZ^{\alpha}y(x)\,d_qx=l,\\
\quad&y(0)=1,\qquad y(a)=e_{\alpha,1}(a^{\alpha};q),
\end{split}\ee
where $0<a(1-q)<1$.
Then \[H=\left(\CZ^{\alpha}y\right)^2-\lambda e_{\alpha,1}(x;q)\CZ^{\alpha}y,\]
and \[\partial_2 H+D_{q,a^-}^{\alpha}\partial_3H=D_{q,a^-}^{\alpha}\left(2\CZ^{\alpha}y(x)-\lambda~e_{\alpha,1}(x;q)\right).\]
 Therefore  a solution of the problem is $\lambda=2$ and $y(x)=e_{\alpha,1}(x^{\alpha};q)$ . Similarly
a solution of the problem
\be\label{IP:2}\begin{split}
J(y)&=\int_{0}^{a}\left(\CZ^{\alpha}y(x)\right)^2\,d_qx,\\
I(y)&=\int_{0}^{a}E_{\alpha,1}((qx)^{\alpha};q)\CZ^{\alpha}y(x)\,d_qx=l,\\
\quad&y(0)=1,\qquad y(a)=E_{\alpha,1}(a^{\alpha};q),
\end{split}\ee
where $a>0$ is $y(x)=E_{\alpha,1}(x^{\alpha};q)$.
 \end{exa}
\section{Existence of Discrete Spectrum for a fractional $q$-Sturm--Liouville problem}

In this section, we  use the  $q$-calculus of variations we developed in Sections \ref{Sec:CV1}  to investigate the existence of solutions of the qFSLP
\be\label{EVP3}
D_{q,a^-}^{\alpha}p(x)\CZ^{\alpha}y(x) +r(x)y(x)=\lambda w_{\alpha}y(x),\quad x\in A_{q,qa}^*,
\ee
under the boundary condition
\be\label{BC6} y(0)=y(a)=0.
\ee
The proof of the main result of this section depends on Arzela-Ascoli Theorem~\cite[P. 156]{Rud64}. The setting of this theorem  is a compact metric space $X$. Let $C(X)$ denote the space of all continuous functions on
$X$ with values in $\mathbb{C}$ or $\mathbb{R}$. $C(X)$ is associated with the metric function
\[d(f, g) = \max\set{|f((x)- g(x)|\; :\; x\in X}.\]

\begin{thm}[Arzela-Ascoli Theorem]
If a sequence $\set{f_n}_n$ in $C(X)$ is bounded
and equicontinuous then it has a uniformly convergent subsequence.
\end{thm}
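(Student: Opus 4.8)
The plan is to run the classical diagonal-subsequence argument: I would extract a subsequence converging on a countable dense subset of $X$, and then use equicontinuity together with the compactness of $X$ to upgrade this pointwise convergence to uniform convergence, finally invoking completeness of $C(X)$ to produce the limit.

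First I would use that the compact metric space $X$ is separable to fix a countable dense subset $D=\set{x_1,x_2,\ldots}$ of $X$. Since $\set{f_n}_n$ is bounded, the scalar sequence $\set{f_n(x_1)}_n$ is bounded, so by Bolzano--Weierstrass it admits a convergent subsequence; denote the corresponding subsequence of functions by $\set{f_{1,n}}_n$. From $\set{f_{1,n}}_n$ extract $\set{f_{2,n}}_n$ so that $\set{f_{2,n}(x_2)}_n$ converges (and convergence at $x_1$ persists, being a subsequence), and continue inductively, obtaining nested subsequences with $\set{f_{k,n}(x_j)}_n$ convergent for every $j\le k$. The diagonal sequence $g_n:=f_{n,n}$ has the property that for each fixed $k$ its tail $(g_n)_{n\ge k}$ is a subsequence of $(f_{k,n})_n$; hence $\set{g_n(x_j)}_n$ converges for every $x_j\in D$.

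Next I would promote pointwise convergence on $D$ to uniform convergence via an $\eps/3$ estimate. Given $\eps>0$, equicontinuity furnishes $\delta>0$ with $\abs{g_n(x)-g_n(y)}<\eps/3$ for all $n$ whenever $d(x,y)<\delta$. By density the collection $\set{B(x_j,\delta):x_j\in D}$ covers the compact set $X$, so finitely many of these balls, centered at $x_1,\ldots,x_N$ say, already cover $X$. Since $\set{g_n(x_j)}_n$ converges, hence is Cauchy, for each of these finitely many centers, I can choose $M$ so large that $\abs{g_n(x_j)-g_m(x_j)}<\eps/3$ for all $n,m\ge M$ and all $j\le N$ simultaneously. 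For arbitrary $x\in X$, selecting a center $x_j$ with $d(x,x_j)<\delta$ and applying the triangle inequality $\abs{g_n(x)-g_m(x)}\le\abs{g_n(x)-g_n(x_j)}+\abs{g_n(x_j)-g_m(x_j)}+\abs{g_m(x_j)-g_m(x)}<\eps$ shows that $\set{g_n}_n$ is uniformly Cauchy on $X$. Completeness of $C(X)$ in the supremum metric then yields a function to which $\set{g_n}_n$ converges uniformly, which is the desired subsequence.

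The main obstacle is organizing the diagonalization so that the single diagonal sequence inherits convergence at \emph{every} point of $D$; once that is secured, the remaining work is the standard $\eps/3$ argument, whose only real subtlety is using compactness to reduce the cover to finitely many centers, so that one choice of $M$ works uniformly in $x\in X$ rather than merely pointwise.
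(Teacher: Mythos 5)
Your proposal is correct and complete: it is the classical diagonal-subsequence argument over a countable dense subset of the compact metric space $X$, followed by the standard $\varepsilon/3$ estimate using equicontinuity and a finite subcover, and finally completeness of $C(X)$ in the supremum metric. The paper itself offers no proof of this statement --- it quotes Arzel\`a--Ascoli as a known theorem and cites Rudin's \emph{Principles of Mathematical Analysis}, where the proof given is essentially the one you wrote --- so your argument simply reproduces the standard proof of the cited reference, and there is nothing further to compare.
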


In our $q$-setting, we take $X=A_{q,a}^*$. Hence   $f\in C(A_{q,a}^*) $ if and only if  $f$ is $q$-regular at zero, i.e.
\[f(0):=\lim_{n\to\infty}f(aq^n).\]
\begin{rem}
A question may be raised why  in \eqref{EVP3} we have only  $x\in A_{q,qa}^*$ instead of $A_{q,a}^*$.
 The reason for that is the  qFSLP~\eqref{EVP3}--\eqref{BC6}  will be solved  by using  the $q$-fractional isoperimetric problem developed in Theorem~\ref{thm:qIP},
  and its  $q$-Euler--Lagrange equation~\eqref{ELE2} holds only for $x\in A_{q,qa}^*$.
  Also,  in order that \eqref{EVP3} holds at $x=a$, we should have $D_{q,a^-}^{\alpha}(p(\cdot)\CZ^{\alpha}y(\cdot))(a)=0$ and
  this holds only if $p(a)\CZ^{\alpha}y(a)=0$ which may not hold.

\end{rem}

\begin{thm}
Let $\frac{1}{2}<\alpha<1$. Assume that the functions $p,\,r,\, w_{\alpha}$  are defined on $A_{q,a}^*$ and  satisfying the conditions
\begin{enumerate}
\item[(i)] $w_{\alpha}$ is a positive continuous function on $[0,a]$ such that $D_q^{k}\frac{1}{w_{\alpha}}$ $(k=0,1,2)$ are bounded functions on $A_{q,a}$,
\item[(ii)] $r$ is a bounded function on $A_{q,a}$,
\item[(iii)]  $p\in C(A_{q,a}^*)$ such that  $\ds\inf_{x\in A_{q,a}}p(x)>0$, and  $\ds\sup_{x\in A_{q,a}}\left|\frac{r(x)}{w_{\alpha}(x)}\right|<\infty$.
\end{enumerate}
 The $q$-fractional Sturm--Liouville problem \eqref{EVP3}--\eqref{BC6} has an infinite number of eigenvalues $\lambda^{(1)}$, $\lambda^{(2)}$, \ldots, and to each eigenvalue $\lambda^{(n)}$ there is
a corresponding eigenfunction $y^{(n)}$ which is unique up to a constant factor. Furthermore, eigenfunctions $y^{(n)}$ form an orthogonal set of solutions in the Hilbert space
$L_q^2(A_{q,a}^*,w_{\alpha})$.
\end{thm}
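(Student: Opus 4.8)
The plan is to realize the eigenvalues and eigenfunctions as successive constrained minima of an energy functional and then to invoke the isoperimetric machinery of Theorem~\ref{thm:qIP}. First I would attach to the problem \eqref{EVP3}--\eqref{BC6} the functional
\[
J(y)=\int_{0}^{a}\left[p(x)\left(\CZ^{\alpha}y(x)\right)^2+r(x)\,y^2(x)\right]d_qx,
\]
to be minimized over $y\in\V$ subject to the isoperimetric constraint $I(y)=\int_{0}^{a}w_{\alpha}(x)\,y^2(x)\,d_qx=1$. Taking $F=p\,v^2+r\,u^2$ and $G=w_{\alpha}u^2$ and computing $\partial_2 H+D_{q,a^-}^{\alpha}\partial_3 H$ for $H=F-\lambda G$ reproduces \eqref{EVP3} exactly, with the Lagrange multiplier $\lambda$ playing the role of the eigenvalue. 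Thus it suffices to produce a minimizer: by Theorem~\ref{thm:qIP} any minimizer that is not an extremal of $I$ solves \eqref{ELE2}, and since $I(y)=1\neq 0$ forces $y\neq 0$, no minimizer is the trivial extremal of $I$. The quantity whose infimum defines the first eigenvalue is the Rayleigh quotient $R(y)=J(y)/I(y)$.

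The compactness step proceeds by the direct method. Because $p_0:=\inf_{A_{q,a}}p>0$ and $\sup|r/w_{\alpha}|<\infty$, the functional $R$ is bounded below, so $\lambda^{(1)}:=\inf R$ is finite; I fix a minimizing sequence $\{y_n\}\subset\V$ with $I(y_n)=1$ and $J(y_n)\to\lambda^{(1)}$. Boundedness of $J(y_n)$ together with boundedness of $\int_0^a r\,y_n^2\,d_qx$ forces $\int_0^a p(\CZ^{\alpha}y_n)^2\,d_qx$ to be bounded, and $p\geq p_0$ then gives a uniform bound on $\norm{\CZ^{\alpha}y_n}_2$. Using $y_n(0)=0$ and \eqref{CR4} I write $y_n=I_{q,0^+}^{\alpha}\CZ^{\alpha}y_n$; the hypothesis $\alpha>\tfrac12$ and the embedding \eqref{ineq0} then bound $\norm{y_n}$ uniformly, while a Cauchy--Schwarz estimate on $I_{q,0^+}^{\alpha}$ gives $|y_n(x)|\leq C\,x^{\alpha-\frac12}\norm{\CZ^{\alpha}y_n}_2$, which tends to $0$ as $x\to 0$ uniformly in $n$. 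This is precisely the equicontinuity needed on the compact set $A_{q,a}^*$, so the Arzela--Ascoli Theorem yields a subsequence $y_{n_k}\to y^{(1)}$ uniformly, with $y^{(1)}(0)=y^{(1)}(a)=0$ and $I(y^{(1)})=1$.

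I expect the main obstacle to be showing that $y^{(1)}$ actually attains the infimum, i.e. $J(y^{(1)})\leq\lambda^{(1)}$. Uniform convergence controls $\int_0^a r\,y^2\,d_qx$ immediately, but the leading term demands a lower-semicontinuity argument, since $\CZ^{\alpha}y_{n_k}$ converges only weakly. The bounded sequence $\{\CZ^{\alpha}y_{n_k}\}$ has a weakly convergent subsequence $\CZ^{\alpha}y_{n_k}\rightharpoonup g$ in the Hilbert space $L_q^2(A_{q,a}^*)$; since $I_{q,0^+}^{\alpha}$ is a bounded (hence weak-to-weak continuous) operator by \eqref{ineq2} and $y_{n_k}=I_{q,0^+}^{\alpha}\CZ^{\alpha}y_{n_k}\to y^{(1)}$, uniqueness of weak limits gives $y^{(1)}=I_{q,0^+}^{\alpha}g$, and the composition rules \eqref{CR3}--\eqref{CR3-} identify $g=\CZ^{\alpha}y^{(1)}$. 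Weak lower semicontinuity of the convex functional $u\mapsto\int_0^a p\,u^2\,d_qx$ then yields $\int_0^a p(\CZ^{\alpha}y^{(1)})^2\,d_qx\leq\liminf_k\int_0^a p(\CZ^{\alpha}y_{n_k})^2\,d_qx$, so $J(y^{(1)})\leq\lambda^{(1)}$ and equality holds. Finally, the Euler--Lagrange equation \eqref{EVP3} can be used to bootstrap regularity: writing $p\,\CZ^{\alpha}y^{(1)}=I_{q,a^-}^{\alpha}(\lambda^{(1)}w_{\alpha}y^{(1)}-r\,y^{(1)})+\text{const}$ shows $\CZ^{\alpha}y^{(1)}\in C(A_{q,a}^*)$, so $y^{(1)}\in\E$ a posteriori. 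Thus $\lambda^{(1)}$ is the smallest eigenvalue with eigenfunction $y^{(1)}$.

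For the full spectrum I would iterate the construction over the successively restricted classes
\[
\mathcal{M}_n=\set{y\in\V\;:\;I(y)=1,\ \langle y,y^{(j)}\rangle_{w_{\alpha}}=0,\ j=1,\dots,n-1},
\]
setting $\lambda^{(n)}=\inf_{\mathcal{M}_n}R$; the added constraints are linear, so the same compactness and lower-semicontinuity steps give a minimizer $y^{(n)}$, and the extra Lagrange multipliers vanish upon pairing the Euler--Lagrange equation against the earlier eigenfunctions and using the symmetry of $Ly:=D_{q,a^-}^{\alpha}(p\,\CZ^{\alpha}y)+r\,y$. That symmetry follows from \eqref{CP} and \eqref{CP3}: under $y(0)=y(a)=0$ the boundary terms drop and $\int_0^a(Ly)z\,d_qx=\int_0^a p\,\CZ^{\alpha}y\,\CZ^{\alpha}z\,d_qx+\int_0^a r\,yz\,d_qx$, a symmetric bilinear form, whence $(\lambda^{(m)}-\lambda^{(n)})\langle y^{(m)},y^{(n)}\rangle_{w_{\alpha}}=0$ gives orthogonality for distinct eigenvalues. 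The sequence satisfies $\lambda^{(1)}\leq\lambda^{(2)}\leq\cdots$, and to see the process never terminates I argue by contradiction: if the $\lambda^{(n)}$ were bounded, then $J(y^{(n)})=\lambda^{(n)}$ bounded would, via \eqref{ineq0}, make $\{y^{(n)}\}$ uniformly bounded and equicontinuous, hence precompact in $C(A_{q,a}^*)$ and so in $L_q^2(A_{q,a}^*,w_{\alpha})$, contradicting $w_{\alpha}$-orthonormality. Hence there are infinitely many eigenvalues. Uniqueness of each $y^{(n)}$ up to a constant factor follows from the one-dimensionality of the solution space of the linear equation \eqref{EVP3} under \eqref{BC6}.
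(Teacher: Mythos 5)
Your proposal takes a genuinely different route from the paper. You run the direct method of the calculus of variations (minimizing sequence, weak $L_q^2$-compactness, weak lower semicontinuity of the convex part), whereas the paper runs a Ritz--Galerkin scheme: it expands trial functions in the $q$-sine system $S_q(w_kx/a)/\sqrt{w_{\alpha}}$ of Section 3, minimizes the resulting quadratic form on the compact spheres \eqref{C}, and then passes to the limit \emph{not} in the minimization property but in the finite-dimensional Lagrange stationarity identities (its Steps 3--4), using Proposition~\ref{prop:FT} to supply the test functions $h_m$ and the estimates \eqref{ineq1}--\eqref{ineq2} to control the limit. Your compactness step is essentially the paper's Step 2 (and your observation that on the grid $A_{q,a}^*$ equicontinuity only needs to be checked at $0$ is a real simplification); your self-adjointness argument for orthogonality and your unboundedness argument for the infinitude of the spectrum are sound and arguably cleaner than the paper's Step 6.

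However, the central step of your argument has a genuine circularity. The limit $y^{(1)}$ of your minimizing sequence is only shown to satisfy $\CZ^{\alpha}y^{(1)}=g$ with $g\in L_q^2(A_{q,a}^*)$; a weak $L_q^2$-limit need not be $q$-regular at zero (its grid values may grow like $q^{-n/4}$), so $y^{(1)}$ need not lie in $\E$, hence not in $\V$. This breaks two steps as written. First, in ``$J(y^{(1)})\le\lambda^{(1)}$ and equality holds,'' the reverse inequality $J(y^{(1)})\ge\lambda^{(1)}$ requires $y^{(1)}$ to belong to the class over which $\lambda^{(1)}$ was defined as an infimum, which is exactly what is not known. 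Second, Theorem~\ref{thm:qIP} is formulated for extrema in the admissible class (its proof varies an admissible $y$), so it cannot be invoked for $y^{(1)}$; and your proposed escape --- bootstrapping regularity from the Euler--Lagrange equation --- is circular, since that equation is precisely what Theorem~\ref{thm:qIP} was supposed to deliver. The repair is to pose the minimization from the outset over the weak class $\set{I_{q,0^+}^{\alpha}g\;:\;g\in L_q^2(A_{q,a}^*),\ \bigl(I_{q,0^+}^{\alpha}g\bigr)(a)=0}$, observe that the quadratic functionals $J$ and $I$ are differentiable along variations there (no admissibility needed for the first-order condition and the multiplier), and then derive the strong equation from the weak identity \eqref{EQ:5} by $q$-integration by parts and Lemma~\ref{lem0}, exactly as in the paper's Step 4, with regularity of $y^{(1)}$ obtained afterwards from the equation itself. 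This difficulty is the very reason the paper's Ritz scheme works with approximants that are admissible by construction and passes only the stationarity identity to the limit. A second, smaller gap: your closing claim that each eigenfunction is unique up to a constant factor ``follows from one-dimensionality of the solution space'' is an assertion, not a proof; for a fractional $q$-difference equation this is a theorem --- the paper has to invoke \cite[Theorem 3.12]{Zeinab_FSL1} in its Step 5 --- and your proposal offers no substitute for it.
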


\begin{proof}
The qFSLP  \eqref{EVP3}--\eqref{BC6} can be recast as a $q$-fractional  variational problem. Let
\be \label{qF}J(y)=\int_{0}^{a}\left[p(x)\left(\CZ^\alpha y\right)^2+r(x)y^2\right]\,d_qx\ee
and consider the problem of finding the extremals of $J$ subject to the boundary condition
\be\label{qF2}y(0)=y(a)=0,\ee
and the  isoperimetric constraint
\be \label{qF1}I(y)=\int_{0}^{a}w_{\alpha}(x)y^2\,d_qx=1.\ee
The $q$-fractional  Euler-Lagrange equation for the functional $I$ is
\[2 w_{\alpha}(x) y(x)=0,\quad\mbox{ for all}\; x\in A_{q,a}\]
which is satisfied only for the trivial solution $y=0$, because $w_{\alpha}$ is positive on $A_{q,a}$. So, no extremals for $I$ can satisfy  the $q$-isoperimetric
condition. If $y$ is an extremal for the $q$-fractional isoperimetric problem, then from Theorem \ref{thm:qIP}, there exists a constant $\lambda$
 such that $y$ satisfies the $q$-fractional  Euler-Lagrange equation \eqref{ELE2} in $A_{q,qa}^*$ but this is equivalent to the qFSLP \eqref{EVP3}
In the following, we shall derive a method for approximating the eigenvalues and the eigenfunctions at the same time similar to the technique in~\cite{GFbook,KOM}. The proof follows in 6 steps.

\vskip .5 cm
\noindent{\bf Step 1.}
First let us point out that functional \eqref{qF} is
bounded from below. Indeed, since $p,\,w_{\alpha}$ are positive on $A_{q,a}$, then
\[\begin{split}
J(y)&=\int_{0}^{a}\left[p(x)\left(\CZ^\alpha y\right)^2+r(x)y^2\right]\\
&\geq \inf_{x\in A_{q,a}}\frac{r(x)}{w_{\alpha}(x)}\int_{0}^{a}w_{\alpha}(x)y^2(x)\,d_qx=\inf\limits_{x\in A_{q,a}}\frac{r(x)}{w_{\alpha}(x)}=:M>-\infty.
\end{split}
\]
According to Ritz method~\cite[P. 201]{GFbook}, we approximate a solution of \eqref{qF}--\eqref{qF2} using the following $q$-trigonometric functions with the coefficients depending on $w_{\alpha}$:
\be\label{sq}
y_m(x)=\frac{1}{\sqrt{w_{\alpha}}}\sum_{k=1}^{m}\frac{\beta_k}{\sqrt{\mu_k}}S_q(\frac{w_kx}{a}).
\ee
Observe that $y_m(0)=y_m(a)=0$. By substituting \eqref{sq} into \eqref{qF} and \eqref{qF1} we obtain
\be\label{C0}
\begin{gathered}
{J_m}(\beta_1,\ldots,\beta_m)={J_m}([\beta])=\sum_{k,j=1}^m\frac{\beta_j\beta_k}{\sqrt{\mu_j}\sqrt{\mu_k}}\times\\
\int_{0}^{a}\left[p(x)\CZ^{\alpha} \frac{S_q(\frac{w_kx}{a})}{\sqrt{w_{\alpha}}}\CZ^{\alpha} \frac{S_q(\frac{w_jx}{a})}{\sqrt{w_{\alpha}}}+\frac{r(x)}{w_{\alpha}(x)}S_q(\frac{w_kx}{a})S_q(\frac{w_jx}{a})\right]\,d_qx
\end{gathered}
\ee
subject to the condition
\be\label{C}{I_m}(\beta_1,\beta_2,\ldots,\beta_m)={I_m}([\beta])=\frac{a\sqrt{q}}{2}\sum_{k=1}^{m}\beta_k^2=1.
\ee
The functions defined in \eqref{C0} and \eqref{C} are functions of the $m$ variables $\beta_1,\,\beta_2,\ldots,\beta_m$. Thus, in terms of the variables $\beta_1,\ldots, \beta_m$, our problem is to minimize ${J_m}(\beta_1,\beta_2,\ldots,\beta_m)$ on the surface $\sigma_m$ of the $m$ dimensional sphere
defined in \eqref{C}. Since $\sigma_m$ is a compact set and ${J_m}(\beta_1,\beta_2,\ldots,\beta_m)$ is continuous on $\sigma_m$, ${J_m}(\beta_1,\beta_2,\ldots,\beta_m)$ has a minimum $\lambda_m^{{1}}$ at some point $(\beta_1^{(1)},\ldots,\beta_m^{(1)} )$ of $\sigma_m$. Let
\[y_m^{(1)}=\frac{1}{\sqrt{w_{\alpha}}}\sum_{k=1}^{m}\frac{\beta_k^{(1)}}{\mu_k}S_q(\frac{w_kx}{a}).\]
If this procedure is carried out for $m=1,2,\ldots$, we obtain a sequence of numbers $\lambda_1^{(1)},\,\lambda_2^{(1)},\,\ldots,$
and a corresponding sequence of functions
\[y_1^{(1)}(x),\;y_2^{(1)}(x),\;y_3^{(1)}(x),\ldots .\]
Noting that $\sigma_m$ is the subset of $\sigma_{m+1}$ obtained by setting $\beta_{m+1}=0$, while
\[J_{m}(\beta_1,\ldots,\beta_m)=J_{m+1}(\beta_1,\ldots,\beta_m,0),\]
consequently,
\be\label{C3}
\lambda_{m+1}^{(1)}\leq \lambda_m^{(1)}.
\ee
Since increasing the domain of definition of a function can only decrease its minimum. It follows from \eqref{C3} and the fact that $J(y)$ is bounded from below that its limit
\[
\lambda^{(1)}=\lim_{m\to\infty}\lambda_m^{(1)}
\]
exists.

\vskip 0.5 cm

\noindent{\bf Step 2.} We shall prove that the sequence $(y_m^{(1)})_{m\in\mathbb{N}}$ contains a uniformly convergent subsequence. From now on, for simplicity, we shall write $y_m$ instead of $y_m^{(1)}$.  Recall that
\[\lambda_m^{(1)}=\int_{0}^{a}\left[p(x)(\CZ^{\alpha} y_m)^2+r(x)y_m^2\right]\,d_qx\]
is convergent, so it must be bounded, i.e., there exists a constant $M_0>0$ such that
\[\int_{0}^{a}\left[p(x)(\CZ^{\alpha} y_m)^2+r(x)y_m^2\right]\,d_qx\leq M_0,\quad m\in\mathbb{N}.\]
Therefore, for all $m\in\mathbb{N}$ it holds the inequality

\begin{eqnarray*}
\int_{0}^{a}p(x)(\CZ^{\alpha} y_m)^2\,d_qx&\leq& M_0+\left|\int_{0}^{a}r(x)y_m^2(x)\,d_qx\right|\\
&\leq& M_0+\sup_{x\in A_{q,a}}\left|\frac{r(x)}{w_{\alpha}(x)}\right|\int_{0}^{a}w_{\alpha}(x)y_m^2(x)\,d_qx\\
&:=&M_0+\sup_{x\in A_{q,a}}\left|\frac{r(x)}{w_{\alpha}(x)}\right|=:M_1.
\end{eqnarray*}
 Moreover, since $\ds \inf_{x\in A_{q,a}}p(x)>0$ we have
\[\left(\inf_{x\in A_{q,a}}p(x)\right)\int_{0}^{a}(\CZ^{\alpha} y_m)^2\,d_qx\leq \int_{0}^{a}p(x)(\CZ^{\alpha} y_m)^2\,d_qx\leq M_1, \]
and hence
\be \label{i0}\int_{0}^{a}(\CZ^{\alpha} y_m)^2\,d_qx \leq \dfrac{M_1}{\inf\limits_{x\in A_{q,a}}p(x)}=:M_2^2.\ee

Since $y_m(0)=0$, then  from  \eqref{ineq0} and \eqref{i0}
\begin{eqnarray*}
\norm{y_m}&=&\norm{I_{q,0^+}^{\alpha}{}^cD_{q,0^+}^{\alpha} y_m}\leq \widetilde{M}_{\alpha}\norm{{}^cD_{q,0^+}^{\alpha} y_m}_2\\
&\leq&\widetilde{M}_{\alpha} {M_2}.
\end{eqnarray*}
for $\alpha>1/2$. Hence, $(y_m)_m$ is uniformly bounded on $A_{q,a}^*$. Now we prove that the sequence $(y_m)_m$ is equicontinuous.
Let $x_1,x_2\in A_{q,a}$. Assume that $x_1<x_2$. Applying the Schwarz's inequality and \eqref{CR4}
\begin{eqnarray*}
\hskip -1 cm&&\Gamma_q(\alpha)\left|y_m(x_2)-y_m(x_1)\right|=\Gamma_q(\alpha)\left|I_{q,0^+}^{\alpha}\CZ^{\alpha} y_m(x_2)-I_{q,0^+}^{\alpha}\CZ^{\alpha} y_m(x_2)\right|\\
&=&\left|x_2^{\alpha-1}\int_{0}^{x_2}(qt/x_2;q)_{\alpha-1}\CZ^{\alpha}y_m(t)\,d_qt-x_1^{\alpha-1}
\int_{0}^{x_1}(qt/x_1;q)_{\alpha-1}\CZ^{\alpha}y_m(t)\,d_qt\right|\\
&\leq&\left|x_2^{\alpha-1}\int_{x_1}^{x_2}(qt/x_2;q)_{\alpha-1}{}^cD_{q,0^+}^{\alpha}y_m(t)\,d_qt\right|\\
&&+\left|\int_{0}^{x_1}\left\{x_2^{\alpha-1}(qt/x_2;q)_{\alpha-1}-x_2^{\alpha-1}(qt/x_1;q)_{\alpha-1}\right\}{}^cD_{q,0^+}^{\alpha}y_m(t)\,d_qt\right|\\
&\leq& M_2 \left(x_2^{2\alpha-2}\int_{x_1}^{x_2}(qt/x_2;q)_{\alpha-1}^2\,d_qt\right)^{1/2} \\
&+&M_2\left(\int_{0}^{x_1}\left(x_2^{\alpha-1}(qt/x_2;q)_{\alpha-1}-x_1^{\alpha-1}(qt/x_1;q)_{\alpha-1}\right)^2\,d_qt\right)^{1/2}.
\end{eqnarray*}
Since $x_1<x_2$, then we have \[x_2^{\alpha-1}({qt}/{x_2};q)_{\alpha-1}\leq x_1^{\alpha-1} ({qt}/{x_1};q)_{\alpha-1}\quad\mbox{ for all}\;t<x_1<x_2.\]
Using the inequality \[t_1\geq t_2 \geq 0\rightarrow (t_1-t_2)^2\leq t_1^2-t_2^2,\]
we obtain
\begin{eqnarray*}
&&\int_{0}^{x_1}\left(x_2^{\alpha-1}(qt/x_2;q)_{\alpha-1}-x_1^{\alpha-1}(qt/x_1;q)_{\alpha-1}\right)^2\,d_qt\\
&\leq& \int_{0}^{x_1}x_1^{2\alpha-2}(qt/x_1;q)_{\alpha-1}^2\,d_qt-\int_{0}^{x_1}x_2^{2\alpha-2}(qt/x_2;q)_{\alpha-1}^2\,d_qt\\
&=&\int_{x_1}^{x_2}x_2^{2\alpha-2}(qt/x_2;q)_{\alpha-1}^2\,d_qt+\int_{0}^{x_1}x_1^{2\alpha-2}(qt/x_1;q)_{\alpha-1}^2\,d_qt\\
&&-\int_{0}^{x_2}x_2^{2\alpha-2}(qt/x_2;q)_{\alpha-1}^2\,d_qt\\
&=&\int_{x_1}^{x_2}x_2^{2\alpha-2}(qt/x_2;q)_{\alpha-1}^2\,d_qt+\left(x_1^{2\alpha-1}-x_2^{2\alpha-1}\right)\int_{0}^{1}(q\xi;q)^2_{\alpha-1}\,d_q\xi\\
&\leq& \int_{x_1}^{x_2}x_2^{2\alpha-2}(qt/x_2;q)_{\alpha-1}^2\,d_qt
\end{eqnarray*}
for $\alpha>\frac{1}{2}$.  Hence, we have
\begin{eqnarray*}\left|y_m(x_2)-y_m(x_1)\right|&\leq& \frac{2M_2}{\Gamma_q(\alpha)}x_2^{\alpha-1}\left(\int_{x_1}^{x_2}(qt/x_2;q)_{\alpha-1}^2\,d_qt\right)^{1/2}\\
&\leq &\frac{2M_2}{\Gamma_q(\alpha)(q^\alpha;q)_{\infty}^2}x_2^{\alpha-1}\sqrt{x_2-x_1}\leq  \frac{2M_2}{\Gamma_q(\alpha)^2(q^\alpha;q)_{\infty}^2}(x_2-x_1)^{\alpha-\frac{1}{2}}.
\end{eqnarray*}
Hence $\set{y_m}$ is equicontinuous. Therefore, from {\it Arzel\`{a}-Ascoli Theorem} for metric spaces, a uniformly convergent subsequence $(y_{m_n})_{n\in\mathbb{N}}$ exists. It means that we can find $y^{(1)}\in C(A_{q,a}^*)$ such that
\be\label{l1} y^{(1)}=\lim_{n\to\infty}y_{m_n}.\ee

\vskip 0.5 cm

\noindent{\bf Step 3}
From the Lagrange multiplier at $[\beta]=(\beta_1^{(1)},\ldots,\beta_m^{(1)})$, we have
\[\frac{\delta}{\delta \beta_j}\left[ J_m([\beta])-\lambda_{m}^{(1)}I_m([\beta])\right]\Big|_{[\beta]=[\beta^{(1)}]},\;j=1,2,\ldots,m.\]
By multiplying each equation by an arbitrary constant $c_j$ and summing from $1$ to $m$, we obtain
\be\label{LM}
0=\sum_{j=1}^{m}c_j\frac{\delta}{\delta \beta_j}\left[ J_m([\beta])-\lambda_{m}^{(1)}I_m([\beta])\right]\Big|_{[\beta]=[\beta^{(1)}]}.
\ee
For $m\in\mathbb{N}$, set
\[h_m(x):=\frac{1}{\sqrt{w_{\alpha}}}g_m(x);\quad g_m(x):=\sum_{j=1}^{m}\frac{c_j}{\sqrt{\mu_j}}S_q(\frac{w_jx}{a}).\]
According to Proposition \ref{prop:FT}, we can choose the coefficients $c_j$ such that there exists a function $g$ satisfying
\[\lim_{m\to \infty}D_q^k g_m= D_q^k g\quad (k=0,1,2)\]
and the convergence is in $L_q^2(A_{q,a}^*)$ norm. Hence
\be\label{l2}
\lim_{m\to\infty}\norm{D_q^k h_m-D_q^k h}_2=0\quad (k=0,1,2).
\ee
We can write \eqref{LM} in the form
\be\label{eq}
0=\int_{0}^{a}\left[p(x)\CZ^{\alpha} y_m \CZ^{\alpha}h_m+\left(r(x)-\lambda_m^1 w_{\alpha}(x)\right)y_m h_m\right]\,d_qx.
\ee
Since $y_m(0)=0$, then from \eqref{CR6}
\[\CZ^{\alpha}y_m= D_{q,0^+}^{\alpha}y_m=D_q I_{q,0^+}^{1-\alpha}y_m.\]
Then replacing  $\CZ^{\alpha}y_m$ by  $D_{q,0^+}^{\alpha}y_m$ in \eqref{eq}  and applying the $q$-integration by parts rule~\eqref{qIR}, we obtain
\begin{eqnarray*}
0= I_m&:=&-\int_{0}^{a}D_q p(x)\CZ^{\alpha} h_m(x)\left(I_{q,0^+}^{1-\alpha}y_m\right)(qx)\,d_qx\\
&&-\int_{0}^{a}p(qx)D_q \CZ^{\alpha}h_m(x)\left(I_{q,0^+}^{1-\alpha}y_m\right)(qx)\,d_qx\\
&&+\int_{0}^{a}\left[r(x)-\lambda_m^{(1)}w_{\alpha}(x)\right]y_m h_m\,d_qx\\
&&+p(x)\CZ^{\alpha}h_m(x)I_{q,0^+}^{1-\alpha}y_m(x)\Big|_{x=0}^{x=a}.
\end{eqnarray*}
In the following we shall prove that
\be\begin{split}\label{4:15}
I&:=\lim_{m\to\infty}I_m=\int_{0}^{a}-D_q p(x)\CZ^{\alpha} h(x)\left(I_{q,0^+}^{1-\alpha}y^{(1)}\right)(qx)\,d_qx\\
&-\int_{0}^{a}p(qx)D_q \CZ^{\alpha}h(x)\left(I_{q,0^+}^{1-\alpha}y^{(1)}\right)(qx)\,d_qx\\
&+p(x)\CZ^{\alpha}h(x)I_{q,0^+}^{1-\alpha}y(x)\Big|_{x=0}^{x=a}+\int_{0}^{a}\left[r(x)-\lambda^{(1)}w_{\alpha}(x)\right]y^{(1)}h\,d_qx.
\end{split}
\ee
Indeed,
\be\label{38}\begin{gathered}
|I_m-I|\leq \\
\int_{0}^{a}\left|D_q p(x)\left[\CZ^{\alpha}h_m(x)\left(I_{q,0^+}^{1-\alpha}y_m\right)(qx)-\CZ^{\alpha} h(x)\left(I_{q,0^+}^{1-\alpha}y^{(1)}\right)(qx)\right]\right|\,d_qx\\
+\int_{0}^{a}\left|p(qx)\left[D_q\CZ^{\alpha}h_m(x)\left(I_{q,0^+}^{1-\alpha}y_m\right)(qx)-D_q\CZ^{\alpha}h(x)\left(I_{q,0^+}^{1-\alpha}y^{(1)}\right)(qx)\right]\right|\,d_qx\\
+\left|p(x)\CZ^{\alpha} h_m(x)I_{q,0^+}^{1-\alpha}y_m(x)-p(x)\CZ^{\alpha} h(x)I_{q,0^+}^{1-\alpha}y^{(1)}(x)\right|_{x=0}^{x=a}\\
+\int_{0}^{a}\left|\left[r(x)-\lambda_m^{(1)}w_{\alpha}(x)\right]y_m h_m-\left[r(x)-\lambda^{(1)}w_{\alpha}(x)\right]y^{(1)}h\right|\,d_qx.
\end{gathered}\ee
For the first $q$-integral in~\eqref{38}, by adding and subtracting the term \[D_qp(x)\CZ^{\alpha} h(x)\left(I_{q,0^+}^{1-\alpha}y_m\right)(qx)\] to the integrand, we obtain
\begin{eqnarray*}
&&\Scale[.95]{\int_{0}^{a}\left|D_q p(x)\left[\CZ^{\alpha}h_m(x)\left(I_{q,0^+}^{1-\alpha}y_m\right)(qx)-\CZ^{\alpha} h(x)\left(I_{q,0^+}^{1-\alpha}y^{(1)}\right)(qx)\right]\right|\,d_qx}\\
&\leq&\norm{D_q\,p}\norm{\CZ\,h}_{\infty}\norm{\left(I_{q,0^+}^{1-\alpha}y_m\right)(qx)-\left(I_{q,0^+}^{1-\alpha}y^{(1)}\right)(qx)}_1\\
&&+\norm{D_q\,p}M_3K_{1-\alpha}\norm{\CZ^{\alpha}(h_m-h)}_2\\
&\leq&\frac{\norm{D_q\,p}}{q}\left\{\norm{\CZ\,h}_{\infty}\norm{I_{q,0^+}^{1-\alpha}y_m-I_{q,0^+}^{1-\alpha}y^{(1)}}_1+M_3K_{1-\alpha}\norm{\CZ^{\alpha}(h_m-h)}_2\right\}
\end{eqnarray*}

where $K_{1-\alpha}$ is the constant defined in \eqref{ineq3} and $M_3:=\sup_{m\in\mathbb{N}}\norm{y_m}_{\infty}$. From \eqref{l1} and \eqref{l2}
\[\lim_{m\to\infty}\norm{y_m-y^{(1)}}=\lim_{m\to\infty}\norm{D_q h_m-D_qh}_2=0,\]
then applying \eqref{ineq1}--\eqref{ineq2}, we obtain
\[\lim_{m\to\infty}\norm{I_{q,0^+}^{1-\alpha}y_m-I_{q,0^+}^{1-\alpha}y^{(1)}}_1=\lim_{m\to\infty}\norm{\CZ^{\alpha}(h_m-h)}_2=0\] and the first $q$-integral vanishes as $m\to\infty$. As, for the second $q$-integral,  we add and subtract the term
$p(qx)D_q\CZ^{\alpha}h(x)\left(I_{q,0^+}^{1-\alpha}y_m\right)(qx)$. This gives
\begin{eqnarray*}
&&\Scale[.95]{\int_{0}^{a}\big|p(qx)\left[D_q\CZ^{\alpha}h_m(x)\left(I_{q,0^+}^{1-\alpha}y_m\right)(qx)-D_q\CZ^{\alpha}h(x)\left(I_{q,0^+}^{1-\alpha}y^{(1)}\right)(qx)\right]\big|\,d_qx}\\
&\leq& \norm{p}\norm{D_{q}\CZ^{\alpha}h}_{2}\norm{\left(I_{q,0^+}^{1-\alpha}y_m\right)(qx)-\left(I_{q,0^+}^{1-\alpha}y^{(1)}\right)(qx)}_2\\
&&+\norm{p}M_3K_{1-\alpha}\norm{D_q\CZ^{\alpha}(h_m-h)}_2\\
&\leq&\frac{\norm{p}}{q}\left\{\norm{D_{q}\CZ^{\alpha}h}_{2}\norm{I_{q,0^+}^{1-\alpha}y_m-I_{q,0^+}^{1-\alpha}y^{(1)}}_2+M_3K_{1-\alpha}\norm{D_q\CZ^{\alpha}(h_m-h)}_2\right\}.
\end{eqnarray*}
Since $D_q\CZ^{\alpha}f(x)=I_q^{1-\alpha}D_q^2 f$ if $D_qf(0)=0$, and since $\lim_{m\to\infty}\norm{D_q^2h_m-D_q^2h}_2=0$ then
from \eqref{ineq2}, the second $q$-integral tends to zero as $m$ tends to $\infty$.
For the next two terms, we have for $x=0,\, a$
\[(I_{q,0^+}^{1-\alpha}y_m)(qx)=q^{1-\alpha}I_{q,0^+}^{\alpha}y_m(qx)\to q^{1-\alpha}I_{q,0^+}^{\alpha}y^{(1)}(qx)\]
resulting from the convergence of the sequence $\norm{y_m-y}\to 0$,
and at the points $x=0,x=a$, we obtain
\[\RD h_m(0)\to \RD h(0),\; \RD h_m(a)\to \RD h(a).\]
Therefore,
\[\left|p(x)\RD h_m(x)I_{q,0^+}^{1-\alpha}y_m(x)-p(x)\RD h(x)I_{q,0^+}^{1-\alpha}y^{(1)}(x)\right|_{x=0}^{x=a}=0.\]
Similarly,  the last term in the estimation \eqref{38} vanishes as $m\to \infty$.
\vskip 0.5 cm

\noindent{\bf Step 4}
Since \be\label{EQ:5} I=\int_{0}^{a} p(x)\CZ^{\alpha}y(x)\CZ^{\alpha}h(x)+(r(x)-\lambda w_{\alpha})y(x)h(x)\, d_qx=0.\ee
Set
\bea
\gamma_1(x)&:=&p(x)\CZ^{\alpha}y(x)\non\\
\gamma_2(x)&:=&(r(x)-\lambda\,w_{\alpha})y(x)\non\\
\eea
Thus, since $h(0)=h(a)=0$, then
\[I=\int_{0}^{a}\left[\left( I_{q,a^-}^{1-\alpha}\gamma_1\right)(x)-(I_{q,0^+}\gamma_2)(qx)\right]D_qh(x)\,d_qx=0.\]
Hence, from Lemma~\ref{lem0} there is a constant $c$ such that
\be\label{PE} \left(I_{q,a^-}^{1-\alpha}\gamma_1\right)(x)-(I_{q,0^+}\gamma_2)(qx)=c,\; \forall x\in A_{q,a}^*.\ee
Acting on the two sides of \eqref{PE} by $-\frac{1}{q}D_{q^{-1}}$, we obtain
\[D_{q,a^-}^{\alpha}\gamma_1(x)+\gamma_2(x)=0,\;x\in A_{q,qa}^*.\]
Hence, $y$ is a solution of the qFSLP.
\vskip .5 cm
\noindent{\bf Step 5} In the following, we show that $(y_m^{(1)})_{m\in\mathbb{N}}$ itself converges to $y^{(1)}$. First, from Theorem~\cite[Theorem 3.12]{Zeinab_FSL1}, for a given $\lambda$ the solution of
\be\label{FSLE1}
\left[D_{q,a^-}^{\alpha} p(x)\CZ^{\alpha}y +r(x)\right]y(x)=\lambda w_{\alpha}(x)y(x),
\ee
subject to the boundary conditions
\be\label{FSLP1} y(0)=y(a)=0\ee and  the normalization condition
\be\label{FSLP2}\int_{0}^{a}w_{\alpha}(x)y^2(x)\,d_qx=1\ee
is unique except for a sign.  Let us assume that $y^{(1)}$ solves \eqref{FSLE1} and the corresponding eigenvalue is $\lambda=
\lambda^{(1)}$. Suppose that $y^{(1)}$ is nontrivial, i.e., there exists $x_0\in A_{q,qa}^*$ such that $y(x_0)\neq 0$ and choose the sign so that $y^{(1)}(x_0)>0$.
Similarly, for all $m\in\mathbb{N}$, let $y_m^{(1)}$ solve \eqref{FSLE1} with corresponding eigenvalue $\lambda=\lambda_m^{(1)}$, and let us choose the sign so that $y_m^{(1)}(x_0)\geq 0$. Now, suppose that
$(y_m^{(1)})$ does not converges to $y^{(1)}$. It means that we can find another subsequence of $y_m^{(1)}$ such that it converges to another solution $\widetilde{y}^{(1)}$.
But for $\lambda=\lambda^{(1)}$, the solution of \eqref{FSLE1}--\eqref{FSLP2} is unique except for a sign, hence
\[\widetilde{y}^{(1)}=-y^{(1)}\]
and we must have $\widetilde{y}^{(1)}(x_0)<0$. However, this is impossible because for all $m\in\mathbb{N}$, $y_m^{(1)}(x_0)\geq 0$. A contradiction, hence the solution is unique.

\vskip 0.6 cm
\noindent{\bf Step 6} In order to find eigenfunction $y^{(2)}$ and the corresponding eigenvalue $\lambda^{(2)}$, we minimize the functional \eqref{qF} subject to \eqref{qF2} and \eqref{qF1} but now with an extra orthogonality condition
\[\int_{0}^{a}y(x)y^{(1)}(x)w_{\alpha}(x)\,d_qx=0. \] If we approximate the solution by
\[y_m(x)=\frac{1}{\sqrt{w_{\alpha}}}\sum_{k=1}^m\frac{\beta_k}{\sqrt{\mu_k}}S_q(\frac{w_k x}{a}),\quad y_m(0)=y_m(a)=0,\]
then we again receive the quadratic form \eqref{C0}. However, admissible solutions are  satisfying~\eqref{C} together with
\be\label{CC}
\frac{a\sqrt{q}}{2}\sum_{k=1}^{m}\beta_k \beta_k^{(1)}=0,\ee i.e. they lay in the $(m-1)$-dimensional sphere. As before, we find that the function $\widetilde{J}([\beta])$ has a minimum
$\lambda_m^{(2)}$ and there exists $\lambda^{(2)}$ such that
\[\lambda^{(2)}=\lim_{m\to\infty} \lambda_m^{(2)},\]
because $J(y)$ is bounded from below. Moreover, it is clear that
 \be\label{FE1}
\lambda^{(1)}\leq \lambda^{(2)}.
\ee
The function $y_m^{(2)}$ defined by
\[
y_m^{(2)}(x):=\frac{1}{\sqrt{w_{\alpha}}}\sum_{k=1}^{m}\frac{\beta_k^{(2)}}{\sqrt{\mu_k}}S_q(\frac{w_k\,x}{a} ),
\]
   achieves its minimum $\lambda_m^{(2)}$, where $\beta^{(2)}=\left(\beta_1^{(2)},\ldots,\beta_m^{(2)}\right)$ is the point satisfying \eqref{C}
  and \eqref{CC}. By the same argument as before, we can prove that the sequence $(y_m^{(2)})$ converges uniformly  to a limit function $y^{(2)}$, which satisfies the
  qFSLP  \eqref{EVP3} with $\lambda^{(2)}$, boundary conditions \eqref{qF2} and orthogonality condition \eqref{qF1}. Therefore, solution $y^{(2)}$
  of the qFSLP corresponding to the eigenvalue $\lambda^{(2)}$ exists. Furthermore, because orthogonal functions cannot be linearly dependent, and since only one eigenfunction corresponds to each eigenvalue(except for a constant  factor)
  we have the strict inequality \[\lambda^{(1)}<\lambda^{(2)}\]instead of \eqref{FE1}. Finally, if we repeat the above procedure with similar modifications, we can obtain eigenvalues
  $\lambda^{(3)},\,\lambda^{(4)},\ldots$ and corresponding eigenvectors $y^{(3)}, y^{(4)},\ldots$.
\end{proof}
\subsection{The first eigenvalue}
\begin{defn}
The Rayleigh quotient for the $q$ fractional Sturm--Liouville problem \eqref{EVP3}--\eqref{BC6} is defined by
\[R(y):=\dfrac{J(y)}{I(y)},\]
where $J(y)$ and $I(y)$ are given by \eqref{qF} and \eqref{qF1}, respectively.
\end{defn}

\begin{thm}Let   $y$ be a non zero function satisfying $y$ and $\CZ^{\alpha}y$  are in $C(A_{q,a}*)$   and $y(0)=y(a)=0$. Then,  $y$ is a  minimizer of $R(y)$ and $R(y)=\lambda$
if and only if $y$ is  an eigenfunction  of problem \eqref{EVP3}--\eqref{BC6} associated with  $\lambda$.  That is, the minimum
value of $R$ at $y$ is the first
 eigenvalue $\lambda^{(1)}$.
\end{thm}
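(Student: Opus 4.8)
The plan is to exploit the scale invariance of the Rayleigh quotient together with the isoperimetric machinery of Theorem~\ref{thm:qIP} and the integration-by-parts rule~\eqref{CP3}. Since $R(cy)=R(y)$ for every nonzero constant $c$, minimizing $R$ over the admissible class is equivalent to minimizing the functional $J$ in \eqref{qF} subject to the normalization $I(y)=1$ in \eqref{qF1} and the boundary condition $y(0)=y(a)=0$; this is precisely the $q$-fractional isoperimetric problem already solved. The hypotheses on $y$ place it in $\V$, so all the earlier results apply.

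First I would record the basic identity. For any admissible $y$ for which $g:=p\,\CZ^\alpha y$ is bounded with $D_{q,a^-}^{\alpha}g\in L_q^1(A_{q,a}^*)$, applying \eqref{CP3} with $f=y$ and $g=p\,\CZ^\alpha y$ and using $y(0)=y(a)=0$ to annihilate the boundary term yields
\[
\int_{0}^{a}p(x)\left(\CZ^\alpha y\right)^2 d_qx=\int_{0}^{a}y(x)\,D_{q,a^-}^{\alpha}\left(p\,\CZ^\alpha y\right)(x)\,d_qx .
\]
Adding $\int_0^a r\,y^2\,d_qx$ to both sides gives $J(y)=\int_0^a y\left[D_{q,a^-}^{\alpha}(p\,\CZ^\alpha y)+r\,y\right]d_qx$. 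Hence, if $y$ is an eigenfunction with eigenvalue $\lambda$, substituting \eqref{EVP3} produces $J(y)=\lambda\int_0^a w_\alpha y^2\,d_qx=\lambda\,I(y)$, that is $R(y)=\lambda$. This settles the implication ``$y$ an eigenfunction $\Rightarrow R(y)$ equals its eigenvalue.''

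For the converse, suppose $y$ minimizes $R$, and normalize so that $I(y)=1$; then $y$ minimizes $J$ under the constraint $I(y)=1$. As observed in the proof of the existence theorem, the only extremal of $I$ is the trivial function, because the Euler--Lagrange equation of $I$ reads $2w_\alpha y=0$ and $w_\alpha>0$ on $A_{q,a}$. Thus the nonzero $y$ is not an extremal of $I$, and Theorem~\ref{thm:qIP} applies: there is a multiplier $\mu$ with $\partial_2 H+D_{q,a^-}^{\alpha}\partial_3 H=0$ for $H=F-\mu G$, where $F=p\left(\CZ^\alpha y\right)^2+r\,y^2$ and $G=w_\alpha y^2$. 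Computing $\partial_2 H=2\left(r-\mu w_\alpha\right)y$ and $\partial_3 H=2p\,\CZ^\alpha y$ reduces this to $D_{q,a^-}^{\alpha}\left(p\,\CZ^\alpha y\right)+r\,y=\mu\,w_\alpha y$, which is exactly \eqref{EVP3}; hence $y$ is an eigenfunction, and the identity of the previous paragraph forces $\mu=R(y)=\lambda$.

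Finally, to identify the minimum value with the first eigenvalue, I would combine the two directions with the construction in the existence theorem. The eigenfunction $y^{(1)}$ of $\lambda^{(1)}$ satisfies $R(y^{(1)})=\lambda^{(1)}$ by the first part, so $\min R\le\lambda^{(1)}$; conversely every minimizer is an eigenfunction whose $R$-value equals its eigenvalue, and since $\lambda^{(1)}$ is the smallest eigenvalue delivered by the Ritz procedure, one expects $\min R\ge\lambda^{(1)}$. I expect this last inequality to be the main obstacle: one must know that the Ritz minima $\lambda_m^{(1)}$ converge to the true infimum of $R$ over the \emph{whole} admissible class, not merely over the finite-dimensional trial subspaces spanned by the $S_q$-functions. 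This is where Proposition~\ref{prop:FT} is essential, since it guarantees that an arbitrary admissible $y$ with $I(y)=1$ is approximable in the relevant $L_q^2$ norms by truncated $q$-sine expansions $y_m$, whence $J(y_m)\to J(y)$ and therefore $R(y)\ge\lim_m\lambda_m^{(1)}=\lambda^{(1)}$. Care is needed to control the convergence of $\CZ^\alpha y_m$ to $\CZ^\alpha y$, which I would handle through the mapping properties \eqref{ineq2}--\eqref{ineq0}; the remaining estimates are routine.
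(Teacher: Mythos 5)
Your proposal is correct, and its two main directions are sound, but the ``minimizer $\Rightarrow$ eigenfunction'' half takes a genuinely different route from the paper's. The paper proves that direction by a direct first-variation computation on the quotient itself: it forms $\xi(h)=J(y+h\eta)/I(y+h\eta)$ for an arbitrary variation $\eta$ with $\eta(0)=\eta(a)=0$, computes $\xi'(0)=0$ by the quotient rule (so the multiplier appears automatically as $\psi(0)/\phi(0)=R(y)=\lambda$), and then applies the integration by parts \eqref{CP3} and the fundamental Lemma~\ref{lem4} to extract \eqref{EVP3}; no appeal to the isoperimetric machinery is made. You instead use scale invariance to normalize $I(y)=1$, observe that a nonzero $y$ cannot be an extremal of $I$, and invoke Theorem~\ref{thm:qIP}, recovering \eqref{EVP3} from the Euler--Lagrange equation of $H=F-\mu G$ and then pinning down $\mu=R(y)$ via the integration-by-parts identity. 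Both arguments are valid: yours reuses the proven Lagrange-multiplier theorem and mirrors exactly the recasting used in the existence proof, while the paper's is self-contained and delivers the multiplier without a separate identification step. Your other direction (eigenfunction $\Rightarrow R(y)=\lambda$: multiply \eqref{EVP3} by $y$, $q$-integrate, kill the boundary term with $y(0)=y(a)=0$) is exactly the paper's. Concerning the final identification of $\min R$ with $\lambda^{(1)}$: the paper disposes of it in two sentences --- any minimum of $R$ is an eigenvalue, every eigenvalue is a value of $R$, hence the minimum is the smallest eigenvalue --- and never addresses whether the Ritz limit $\lambda^{(1)}$ is actually the infimum of $R$ over the \emph{whole} admissible class. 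You flag this density question explicitly and sketch a resolution through Proposition~\ref{prop:FT}; that is more careful than the paper, but note that Proposition~\ref{prop:FT} demands smoothness of $D_q^k g$ for $k=0,1,2$ near zero, which a general admissible $y$ (only $y$ and $\CZ^{\alpha}y$ in $C(A_{q,a}^*)$) need not possess, so your sketch of that step is not yet a complete argument either --- though it is no further from completeness than the paper's own assertion.
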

\begin{proof}First, we prove the necessity.
 Assume that $y$ is a non zero minimizer of $R(y)$ and $R(y)=\lambda$.
Consider the one parameter family of curves
\[y=y+h\eta,\quad |h|\leq \epsilon,\]
where $\eta$ and $\CZ^{\alpha} $ are $C(A_{q,a}^*)$ functions and $\eta(0)=\eta(a)=0$ and $\eta\neq 0$.
Define  functions  $\phi,\psi,\xi$ on $[-\epsilon,\epsilon]$ by
\[\phi(h):=I(y+ h\eta),\; \psi(h):=J(y+h\eta),\; \xi(h)=R(y+h\eta)=\frac{\psi(h)}{\phi(h)},\;h\in[-\epsilon,\epsilon].\]
Hence $\xi$ is $C^1$ function on  $[-\epsilon,\epsilon]$. Since $\xi(0)=R(y)$, then $0$ is a minimum value of $\xi$. Consequently,
$\xi_i'(0)=0$. But
\[\xi'(h)=\frac{1}{\phi(h)}\left(\psi'(h)-\frac{\psi(h)}{\phi(h)}\phi'(h)\right)\]
and
\begin{eqnarray*}\psi'(0)&=&2\int_{0}^{a}\left[p(x)\CZ^{\alpha}y\,\, \CZ^{\alpha}\eta+r(x)y\eta\right]\,d_qx,\\
\phi'(0)&=&2\int_{0}^{a}w_{\alpha}(x)y(x)\eta(x)\,d_qx,\\
\frac{\psi(0)}{\phi(0)}&=&R(y)=\lambda.
\end{eqnarray*}
Therefore,
\[\xi'(0)=\frac{2}{I(y)}\left(\int_{0}^{a}\left[p(x)\CZ^{\alpha}y \CZ^{\alpha}\eta+\left(r(x)y-\lambda w_{\alpha}\right)\eta\right)\,d_qx\right).\]
Using \eqref{CP3}, we obtain
\[\int_{0}^{a}\left[D_{q,a^-}^{\alpha}P(x)\CZ^{\alpha}y(x)+\left(r(x)-\lambda\right) w_{\alpha}(x)y(x)\right]\eta(x)\,d_qx=0.\]
Applying~Lemma~\ref{lem4}, we obtain
\[D_{q,a^-}^{\alpha}P(x)\CZ^{\alpha}y(x)+r(x)y(x)=\lambda w_{\alpha}(x)y(x),\quad x\in A_{q,qa}^*.\]
This proves the necessity. Now we prove the sufficiency.
Assume that $y$ is an eigenfunction of \eqref{EVP3}--\eqref{BC6} associated with an eigenvalue $\lambda$. Then
\be\label{Eq:0}
D_{q,a^-}^{\alpha}P(x)\CZ^{\alpha}y(x)+r(x)y(x)=\lambda w_{\alpha}(x)y(x),\quad x\in A_{q,qa}^*.
\ee
Multiply \eqref{Eq:0} by $y$ and calculate the $q$-integration from 0 to $a$, we obtain
\[
\int_{0}^{a} \left[y(x)D_{q,a^-}^{\alpha}P(x)\CZ^{\alpha}y(x)+r(x)y^2(x)\right]\,d_qx=\lambda \int_{0}^{a}w_{\alpha}(x)y^2(x)\,d_qx.
\]
Since $y\neq 0$, then $\int_{0}^{a}w_{\alpha}(x)y^2(x)\,d_qx>0$, and
\[\dfrac{\int_{0}^{a} \left[y(x)D_{q,a^-}^{\alpha}P(x)\CZ^{\alpha}y(x)+r(x)y^2(x)\right]\,d_qx}{\int_{0}^{a}w_{\alpha}(x)y^2(x)\,d_qx}=\lambda.\]
I.e. $R(y)=\lambda$.
 Therefore, any minimum value of $J$ is an eigenvalue and it is attained at the associated eigenfunction. Therefore the minimum value of $J$ is the smallest
  eigenvalue.

\end{proof}

\section{Conclusion and future work}
  This paper is the first paper deals with variational problems of functionals defined in terms of Jackson $q$-integral on finite domain and the left sided Caputo $q$-derivative appears in the integrand. We give a fractional $q$-analogue of the Euler--Lagrange equation and a $q$-isoperimetric problem is defined and solved. We use these results in recasting the qFSLP under consideration as a $q$-isoperimetric problem, and then we solve it by a technique similar to the one used in solving  regular Sturm--Liouville problems in~\cite{GFbook} and fractional Sturm--Liouville problems in~\cite{KOM}. This complete the work started by the author in~\cite{Zeinab_FSL1}, and  generalizes the study of integer Sturm--Liouville problem introduced by Annaby and Mansour in~\cite{AM1}. A similar study for the fractional
Sturm--Liouville problem
\[{}^cD_{q,a^-}p(x)D_{q,0^+}^{\alpha}y(x)+r(x)y(x)=\lambda w_{\alpha}(x)y(x),\]
is in progress.

 \end{document}